\newtheorem{theorem}{Theorem}[section]
\newtheorem{lemma}[theorem]{Lemma}
\newtheorem{definition}[theorem]{Definition}
\numberwithin{equation}{section}
\begin{document}

\title[analysis of acoustic scattering by an elastic
obstacle]{Analysis of transient acoustic scattering by an elastic obstacle}%

\author{Peijun Li}
\address{Department of Mathematics, Purdue University, West Lafayette, Indiana
47907, USA.}
\email{lipeijun@math.purdue.edu}

\author{Lei Zhang}
\address{School of Mathematics, Heilongjiang University, Harbin 150080,
PRC. Heilongjiang Provincial Key Laboratory of Complex Systems Theory and
Computation, Harbin 150080, PRC.}
\email{zl19802003@163.com}

\thanks{The research of LZ was supported in part by a National Natural
Science Foundation of China (No. 11871198, No. 11801116) and the Special Funds
of Science and Technology Innovation Talents of Harbin (No. 2017RAQXJ099).}

\subjclass[2010]{78A46, 65C30}

\keywords{Time domain, acoustic wave equation, elastic wave equation,
fluid-structure interaction, well-posedness and stability, a priori estimate.}

\begin{abstract}
Consider the scattering of an acoustic plane wave by a bounded elastic obstacle
which is immersed in an open space filled with a homogeneous medium. This paper
concerns the mathematical analysis of the coupled two- and three-dimensional
acoustic-elastic wave propagation problem in the time-domain. A compressed
coordinate transformation is proposed to reduce equivalently the scattering
problem into an initial-boundary value problem in a bounded domain over a finite
time interval. The reduced problem is shown to have a unique weak solution by
using the Galerkin method. The stability estimate and an a priori estimate with
explicit time dependence are obtained for the weak solution. The reduced model
problem is suitable for numerical simulations. The proposed method is
applicable to many other time-domain scattering problems imposed in open
domains.
\end{abstract}

\maketitle

\section{introduction}

Consider the scattering of a time-domain acoustic plane wave by a bounded and
penetrable elastic obstacle which is immersed in an open space filled with a
homogeneous acoustic medium such as air or fluid. The obstacle is assumed to be
made of a homogeneous and isotropic elastic medium. When the incident wave hits
on the surface of the obstacle, it will be reflected and the scattered acoustic
wave will be generated in the open space. Meanwhile, an elastic wave is induced
inside the obstacle. This scattering phenomenon leads to a coupled
acoustic-elastic interaction problem. The surface divides the whole space into
two parts: the interior of the obstacle and the exterior of the obstacle. The
wave propagation is governed by the elastic wave equation and the acoustic wave
equation in these two regions, respectively. The acoustic and elastic wave
equations are coupled on the surface through two continuity conditions: the
kinematic interface condition and the dynamic condition. The dynamic interaction
between an elastic structure and surrounding acoustic medium is
encountered in many areas of engineering and industrial design and
identification \cite{FG-07, FG-S-98, MO-95, OS-97}, such as detection of
submerged objects, vibration analysis for aircrafts and automobiles, and
ultrasound vibro-acoustography.

The acoustic-elastic interaction problems have continuously attracted much
attention by many researchers. There are a lot of available mathematical and
numerical results, especially for the time-harmonic wave equations \cite{D-89,
DGH-CMAME-82, F-JASA-51, HJ-86, H-94, HKR-MN-00, LM-SIAP-95, HKY-IPI-16,
YHXZ-IP-16, SM-JCP-06, SG-CMAME-06}. The time-domain problems have received
considerable attention due to their capability of capturing wide-band signals
and modeling more general material and nonlinearity \cite{CM-SIMA-14,
RJ-IEEE-08, WWZ-SIAP-12}. Many approaches are attempted to solve numerically the
time-domain problems such as coupling of boundary element and finite element
with different time quadratures \cite{EA-NME-91, FKW-NME-06, SM-JCP-06,
LM-JCP-15, HQSS-IEA-17, MMS-JCP-04}. Compared with the time-harmonic scattering
problems, the time-domain problems are less studied on their rigorous
mathematical analysis due to the additional challenge of the temporal
dependence. The analysis can be found in \cite{CN-JCM-08, LWW-SIAP-15,
GL-JDE-16, GL-M3AS-17} for the time-domain acoustic and electromagnetic
scattering problems in different structures including bounded obstacles,
periodic surfaces, and unbounded rough surfaces. We refer to \cite{GLL-M2AS-18}
and \cite{GLZ-SIMA-17} for the mathematical analysis of the transient
elastic scattering problems in an unbounded structure.

The wave scattering usually involves exterior boundary value problems such as
the acoustic-elastic interaction problem discussed in this paper. The unbounded
domains need to be truncated into bounded ones regardless of
mathematical analysis or numerical computation. Therefore, appropriate boundary
conditions are required on the boundaries of the truncated domains so that no
artificial wave reflection occur. Such boundary conditions are called
transparent boundary conditions (TBCs) or non-reflecting boundary conditions
(NRBCs) \cite{N-01}. They are the subject matter of much ongoing research
\cite{AGH-JCP-02, G-92, GK-SIAP-95, H-AN-99}. The research on the perfectly
matched layer (PML) technique has undergone a tremendous development since
Berenger proposed a PML for solving the Maxwell equations \cite{B-JCP-94,
TY-ANM-98}. The basic idea of the PML technique is to surround the domain of
interest by a layer of finite thickness fictitious material which absorbs all
the waves coming from inside the computational domain. When the waves reach the
outer boundary of the PML region, their values are so small that the homogeneous
Dirichlet boundary conditions can be imposed. Comparing with the PML method for
the time-harmonic scattering problems, the rigorous mathematical analysis is
much more sophisticated for the time-domain PML method due to challenge of the
dependence of the absorbing medium on all frequencies \cite{C-NAM-09,
CW-SINUM-12, HBR-IEEE-02, DJ-CMAME-06, KK-CMAME-11}.

Recently, Bao et al. has done some mathematical analysis for the time-domain
acoustic-elastic interaction problem in two dimensions \cite{BGL-ARMA-18}. The
problem was reformulated into an initial-boundary value problem in a bounded
domain by employing a time-domain TBC. Using the Laplace transform and energy
method, they showed that the reduced variational problem has a unique weak
solution in the frequency domain and obtain the stability estimate for the
solution in the time-domain. An a priori estimates with explicit time dependence
was achieved for the solution of the time-domain variational problem. In
addition, the PML method was discussed and a first order symmetric hyperbolic
system was considered for the truncated PML problem. It was shown that the
system has a unique strong solution and the stability is obtained for the
solution. However, the convergence analysis is lacking for the time-domain PML
problem at present.

In this paper, we carry the mathematical analysis for the two- and
three-dimensional acoustic-elastic interaction problem by using a different
method. It is known that waves have finite speed of propagation in the
time-domain, which differs from the infinite speed of propagation for
time-harmonic waves. We make use of this fact and propose a compressed
coordinate transformation to reduce the problem equivalently into an
initial-boundary value problem in a bounded domain. Given any time $T$, we
consider the problem in the time interval $(0, T]$. The method begins with
constructing an annulus to surround the obstacle. The inner sphere can be
chosen as close as possible to the obstacle, but the radius of the outer
sphere should be chosen sufficiently large so that the scattered acoustic 
wave cannot reach it at time $t=T$. Hence the homogeneous Dirichlet boundary
condition can be imposed on the outer sphere. But the domain may be too large
for actual computation. To overcome this issue, we apply the change of
variables and compress the annulus into a much smaller annulus by mapping the
outer sphere into a sphere which is slightly larger than the inner sphere while
keeping the inner sphere unchanged. The reduced problem can be formulated in a
much smaller domain where the obstacle is only enclosed by a thin annulus. Based
on the Galerkin method and energy estimates, we prove the existence and
uniqueness of the weak solution for the corresponding variational problem.
Furthermore, we obtain a priori estimate with explicit dependence on the time
for the pressure of the acoustic wave and the displacement of the elastic wave.
The method does not introduce any approximation or truncation error. It avoids
the complicated error or convergence analysis which needs to be carefully done
for the TBC or PML method. Therefore, the reduced model problem is also
particularly suitable for numerical simulations due to its simplicity and small
computational domain.

The paper is organized as follows. In Section 2, we introduce the model
equations for the acoustic-elastic interaction problem on and propose the
compressed coordinate transformation to reduce the problem into an
initial-boundary value problem. Section 3 is devoted to the analysis of the
reduced problem, where the well-posedness and stability are addressed, and an a
priori estimates with explicit time dependence is obtained for the time-domain
variational problem. The paper is concluded with some general remarks in Section
4. To avoid distraction from the presentation of the main results, we give in
Appendices the details of the change of variables for the compressed
coordinate transformation.

\section{Problem formulation}

In this section, we introduce the problem geometry and model equations, and
propose a compressed coordinate transformation to reduce the acoustic-elastic
scattering problem into an initial boundary value problem in a bounded domain
over a finite time interval.

\subsection{Problem geometry}

Consider a bounded elastic obstacle which may be described by the bounded domain
$D\subset\mathbb R^d$ with Lipschitz continuous boundary $\partial D$, where
$d=2$ or $3$. We assume that $D$ is occupied by an isotropic linearly elastic
medium which is characterized by a constant mass density $\rho_{2}>0$ and
Lam\'{e} parameters $\lambda, \mu$ satisfying $\mu>0, \lambda+\mu>0$.
The obstacle's surface divides the whole space $\mathbb R^d$ into the interior
domain $D$ and the exterior domain $\mathbb R^d\setminus\bar D$. The
elastic wave and the acoustic wave propagates inside $D$ and $\mathbb
R^d\setminus\bar D$, respectively. The exterior domain $\mathbb
R^d\setminus\bar{D}$ is assumed to be connected and filled with a homogeneous,
compressible, and inviscid air or fluid with a constant density $\rho_{1}>0$. It
is known that the acoustic wave has a finite speed of propagation in the time
domain. Hence, for any given time $T>0$, we may always pick a sufficiently large
$R>0$ such that the acoustic wave cannot reach the surface
$\partial B_R=\{x\in\mathbb R^d: |x|=R\}$. Denote the
ball $B_a=\{x\in\mathbb R^d: |x|<a\}$ with
the boundary $\partial B_{a}=\{x\in\mathbb R^d:
|x|<a\}$, where $a>0$ is a constant such that $\bar{D}\subset B_a$.
Usually we have $a\ll R$. Let $b$ be an appropriate constant satisfying $a<b\ll
R$. Define $B_b=\{x\in\mathbb R^d: |x|<b\}$ and
$\partial B_b=\{x\in\mathbb R^d: |x|=b\}$. We shall
consider a compressed coordinate transformation which compresses the annulus
$\{x\in\mathbb R^d: a<|x|<R\}$ into the much smaller
annulus $\{x\in\mathbb R^d: a<|x|<b\}$ by mapping
$\partial B_R$ into $\partial B_b$ while keeping $\partial B_a$ unchanged. Then
the acoustic-elastic interaction problem will be formulated in the bounded
domain $B_b$. The problem geometry is shown in Figure 1.

\begin{figure}\label{geometry}
\centering
\includegraphics[width=0.35\textwidth]{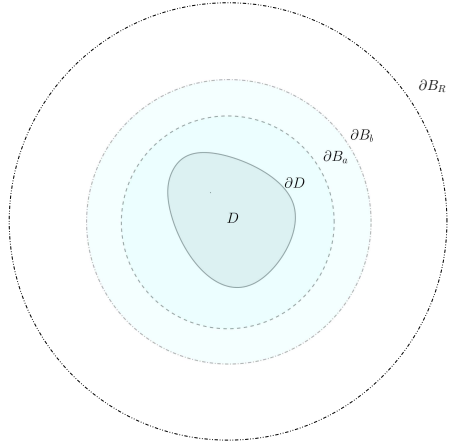}
\caption{Problem geometry of the acoustic scattering by a bounded elastic
obstacle.}
\end{figure}

\subsection{The model equations}

Let the obstacle be illuminated by an acoustic plane wave $p^{\rm inc}(x,
t)=\vartheta(ct-\theta\cdot x)$, where $\vartheta$ is a smooth function with a
compact support and $\theta\in\mathbb S^{d-1}$ is a unit propagation direction
vector. The acoustic wave field in $\mathbb R^d\setminus\bar{D}$ is governed by
the conservation and the dynamics equations in the time domain:
\begin{equation}\label{cde}
 \nabla p(x, t)=-\rho_{1}\partial_t \boldsymbol{v}(x, t),
 \quad c^{2}\rho_{1} \nabla\cdot\boldsymbol{v}(x, t)=-\partial_t p(x, t),\quad
x\in\mathbb R^d\setminus\bar{D},~ t>0,
\end{equation}
where $p$ is the pressure, $\boldsymbol{v}$ is the velocity, $\rho_{1}>0$ and
$c>0$ are the density and wave speed, respectively. Eliminating the velocity
$\boldsymbol{v}$ from \eqref{cde}, we may easily verify that the pressure $p$
satisfies the acoustic wave equation
\begin{equation}\label{aw}
 \frac{1}{c^2}\partial_t^2 p(x, t)-\Delta p(x, t)=0,\quad x\in\mathbb
R^d\setminus\bar{D},~ t>0.
\end{equation}

The scattered field $p^{\rm sc}=p-p^{\rm inc}$ is excited due to the interaction
between the incident field and the obstacle. It follows from \eqref{aw} and the
expression of the plane incident wave $p^{\rm inc}$ that the scattered field
$p^{\rm sc}$ also satisfies the acoustic wave equation
\begin{equation}\label{aws}
 \frac{1}{c^2}\partial_t^2 p^{\rm sc}(x, t)-\Delta p^{\rm sc}(x, t)=0,\quad
x\in\mathbb R^d\setminus\bar{D}, ~ t>0.
\end{equation}
By assuming that the incident field vanishes for $t\leq 0$, i.e., the system is
assumed to be quiescent at the beginning, we may impose the homogeneous initial
conditions for the scattered field
\[
p^{\rm sc}|_{t=0}=\partial_t p^{\rm sc}|_{t=0}=0\quad\text{in}~\mathbb
R^d\setminus\bar{D}.
\]
Since the acoustic wave in \eqref{aws} has a finite speed of propagation, for
any given time $T>0$, we may always pick a sufficiently large $R>0$ such
that the scattered field $p^{\rm sc}$ cannot reach the surface $\partial B_R$,
i.e., the homogeneous Dirichlet boundary condition can be imposed
\[
p^{\rm sc}=0\quad\text{on } \partial B_R\times (0, T].
\]

Recall that the domain $D$ is occupied by a linear and isotropic elastic body.
Under the hypothesis of small amplitude oscillations in the obstacle, the
elastic wave satisfies the linear elasticity equation
\begin{equation}\label{lee}
 \nabla\cdot\boldsymbol{\sigma}(\boldsymbol{u}(x,t))
-\rho_{2}\partial_t^{2}(\boldsymbol{u}(x,t)) = 0,
 \quad x\in D,~ t>0.
\end{equation}
where $\boldsymbol{u}=(u_1, \dots, u_d)^\top$ is the displacement vector,
$\rho_{2}>0$ is the density, and the Cauchy stress tensor $\boldsymbol{\sigma}$
is given by the generalized Hook law:
\begin{equation}\label{hook}
\boldsymbol{\sigma}(\boldsymbol{u}) = 2\mu\boldsymbol{\epsilon(u)}+\lambda {\rm
tr}(\boldsymbol{\epsilon(u)}) I, \quad
\boldsymbol{\epsilon(u)}=\frac{1}{2}\big(\nabla\boldsymbol{u}
+(\nabla\boldsymbol{u})^{\top}\big).
\end{equation}
Here the Lam\'{e} constants $\mu, \lambda$ satisfy $\mu>0, \lambda+\mu >0$,
$I$ is the identity matrix, $\boldsymbol{\epsilon(u)}$ is known as the
strain tensor, and $\nabla\boldsymbol{u}$ is the displacement gradient tensor
defined by
\[
\nabla\boldsymbol{u}=\begin{bmatrix}
    \partial_{x_{1}}u_{1} & \cdots&  \partial_{x_{d}}u_{1}\\
    \vdots                & \ddots&  \vdots\\
    \partial_{x_{1}}u_{d} & \cdots&  \partial_{x_{d}}u_{d}
   \end{bmatrix}.
\]
Substituting \eqref{hook} into \eqref{lee}, we obtain the time-domain Navier
equation
\[
\mu \Delta \boldsymbol{u}(x, t)+(\lambda+\mu)\nabla\nabla\cdot\boldsymbol{u}(x,
t) -\rho_{2}\partial_t^{2}\boldsymbol{u}(x,t) = 0, \quad x\in D,~ t>0.
\]
Since the system is assumed to be quiescent, the displacement vector is
constrained by the homogeneous initial conditions:
\[
\boldsymbol{u}(x,t)|_{t=0}=\partial_t \boldsymbol{u}(x,t)|_{t=0}=0, \quad x\in
D.
\]

To describe the coupling of acoustic and elastic waves at the interface,
the kinematic interface condition is imposed to ensure the continuity of the
normal component of the velocity on $\partial D$:
\[
\boldsymbol{n}_{D}\cdot\boldsymbol{v}(x,t)=\boldsymbol{n}_{D}\cdot\partial_t
\boldsymbol{u}(x,t), \quad x\in \partial D, ~ t>0,
\]
where $\boldsymbol n_D$ is the unit normal vector on $\partial D$ pointing
towards $\mathbb R^2\setminus \bar D$. Noting
$-\rho_{1}\partial_{t}\boldsymbol{v}(x,t)=\nabla p(x,t)$, we have
\[
\partial_{\boldsymbol{n}_{D}} p(x,t)=\boldsymbol{n}_{D} \cdot \nabla
p(x,t)=-\rho_{1}\boldsymbol{n}_{D}\cdot\partial_t^{2} \boldsymbol{u}(x,t),
\quad x\in \partial D, ~ t>0.
\]
In addition, the following dynamic interface condition is required
\[
 -p(x,t)\boldsymbol{n}_{D}=\mu \partial_{\boldsymbol{n}_{D}} \boldsymbol{u}(x,
t)
 +(\lambda+\mu)(\nabla\cdot\boldsymbol{u}(x, t))\boldsymbol{n}_{D}, \quad x\in
\partial D, ~ t>0.
\]

To summarize, the acoustic scattering by an elastic obstacle can be formulated
as an initial boundary value problem in the bounded domain $B_R$ over the finite
time interval $(0, T]$:
\begin{equation}\label{ibvpo}
\begin{cases}
 \frac{1}{c^2}\partial_t^2 p-\Delta p=0,&\quad \text{in}~
B_R\setminus\bar{D}\times (0, T],\\
p=p^{\rm inc},&\quad\text{on}~ \partial B_R\times (0, T],\\
p|_{t=0}=\partial_t p|_{t=0}=0,&\quad\text{in}~B_R\setminus\bar{D},\\
\mu\Delta\boldsymbol{u}+(\lambda+\mu)\nabla\nabla\cdot\boldsymbol{u}
-\rho_{2}\partial_t^{2}\boldsymbol{u}=0, &\quad \text{in}~ D\times (0, T],\\
\boldsymbol{u}|_{t=0}=\partial_t \boldsymbol{u}|_{t=0}=0, &\quad \text{in}~ D,\\
\partial_{\boldsymbol{n}_{D}} p=-\rho_{1}\boldsymbol{n}_{D}\cdot\partial_t^{2}
\boldsymbol{u}, \quad -p\boldsymbol{n}_{D}=\mu \partial_{\boldsymbol{n}_{D}}
\boldsymbol{u}+(\lambda+\mu)(\nabla\cdot\boldsymbol{u})\boldsymbol{n}_{D},
&\quad \text{in}~ \partial D\times (0, T].
\end{cases}
\end{equation}

Now we introduce some useful notation. The scalar,
vector, and matrix real-valued $L^2$ inner products are defined by
\[
(a,b)_{D}:=\int_{D}a b\,{\rm d}x, \quad
(\boldsymbol{a},\boldsymbol{b})_{D}:=\int_{D}
\boldsymbol{a} \cdot \boldsymbol{b}\,{\rm d}x,\quad
(\boldsymbol{A},\boldsymbol{B})_{D}:=\int_{D} \boldsymbol{A} :
\boldsymbol{B}\,{\rm d}x,
\]
where the colon denotes the Frobenius inner product of square matrices, i.e.,
$\boldsymbol{A} : \boldsymbol{B}= {\rm tr} (AB^{\top})$. When using
complex-valued functions, the complex conjugate will be used as needed. Let
$\Omega$ be a bounded open domain with Lipschitz boundary $\partial\Omega$.
Denote by $L^2(\Omega)$ the space of square integrable functions in $\Omega$
equipped with the norm $\|\cdot\|_{L^2(\Omega)}$. Let $H^s(\Omega), s\in\mathbb
R$ be the standard Sobolev space equipped with the norm
$\|\cdot\|_{H^s(\Omega)}$. Denote $\boldsymbol{L}^{2}(D)={L}^{2}(D)^{d}$,
$\boldsymbol{H}^{1}(D)={H}^{1}(D)^{d}$, and ${\boldsymbol{L}^{2} (\partial
D)}={L}^{2} (\partial D)^{d}$, which have norms characterized by
\[
\|\boldsymbol{u}\|^2_{\boldsymbol{L}^{2}(D)}=\sum_{j=1}^{d}\|u_{j}\|^{2}_{L^{2}
(D)}, \quad
\|\boldsymbol{u}\|^2_{\boldsymbol{H}^{1}(D)}=\sum_{j=1}^{d}\|u_{j}\|^{2}_{H^{1}
(D)}, \quad
\|\boldsymbol{u}\|^2_{\boldsymbol{L}^{2} (\partial
D)}=\sum_{j=1}^{d}\|u_{j}\|^{2}_{{L}^{2} (\partial D)}.
\]
The 2-norm of the gradient tensor is defined by
\[
\|\nabla\boldsymbol{u}\|^2_{L^{2}(D)^{d\times d}}=\sum_{j=1}^{d}\int_{D}|\nabla
u_{j}|^{2}{\rm d}x.
\]

\subsection{The reduced problem}

In this section, we propose the compressed coordinate transformation to reduce
equivalently the acoustic-elastic interaction problem \eqref{ibvpo} into an
initial boundary value problem in a much smaller domain $B_b$. Although $R$ is
chosen to be large enough so that the scattered wave cannot reach $\partial
B_R$, $b$ does not have to be large as long as $b>a$. The width of the annulus
$b-a$ can be small and the annulus $B_b\setminus \bar{B}_a$ can be put as close
as possible to enclose the obstacle $D$, which makes it particularly attractive
for the numerical simulation.

Consider the change of variables
\[
\rho=\zeta(r)=\begin{cases}
           r, &\quad r\in [0, a),\\
           \eta(r),&\quad r\in [a, b],
          \end{cases}
\]
where
\[
 \eta(r)=\frac{\xi(r)}{(b-r)(R-b)+(b-a)^2},\quad \xi(r)=a^2(R-b)+r(a^2+(b-2a)R).
\]
A simple calculation yields
\[
 \eta'(r)=\frac{(R-a)^2(b-a)^2}{((b-r)(R-b)+(b-a)^2)^2}.
\]
It is clear to note that
\[
 \eta(a)=a,\quad \eta(b)=R, \quad \eta'(a)=1,
\]
which imply that the function $\zeta\in C^1[0, b]$ is positive and monotonically
increasing, i.e., $\zeta>0$ and $\zeta'>0$. Hence, the transform $\zeta$ keeps
the ball $B_a$ to itself while compresses the annulus $B_R\setminus\bar B_a$
into the annulus $B_b\setminus\bar B_a$. Define $\Omega=B_b\setminus\bar D$ and
its boundary $\partial\Omega=\partial D\cup\partial B_b$.

Let $v$ be the transformed scattered field of $p$ under the change of
variables. It follows from the appendices that $v$ satisfies
\[
\frac{\beta}{c^2}\partial_t^2 v-\nabla\cdot(M\nabla
v)=0\quad\text{in}~\Omega\times(0, T],
\]
where the variable coefficients
\[
 \beta=\frac{\zeta\zeta'}{r},\quad M=Q\begin{bmatrix}
     \frac{\zeta}{r\zeta'} & 0\\
     0 & \frac{r\zeta'}{\zeta}
    \end{bmatrix}Q^\top,\quad
 Q=\begin{bmatrix}
    \cos\theta & -\sin\theta\\
    \sin\theta & \cos\theta
   \end{bmatrix}\quad\text{for } d=2
\]
and
\[
 \beta=\frac{\zeta^2}{r^2}, \quad M=Q\begin{bmatrix}
\frac{\zeta^2}{r^2\zeta'} & 0 & 0 \\
 0 & \zeta' & 0\\
0 & 0 & \zeta'
\end{bmatrix}Q^\top,\quad  Q=\begin{bmatrix}
   \sin\theta\cos\varphi & \cos\theta\cos\varphi & -\sin\varphi\\
\sin\theta\sin\varphi & \cos\theta\sin\varphi & \cos\varphi\\
\cos\theta & -\sin\theta & 0
   \end{bmatrix}\quad\text{for } d=3.
\]
Here $(r, \theta)$ and $(r, \theta, \varphi)$ are the
polar and spherical coordinates in the two- and three-dimensions, respectively.
It is easy to note that $\beta$ is a continuous positive function, $Q$ is an
orthonormal matrix, and $M$ is a symmetric positive definite matrix with
continuous matrix entries. The details are given in the appendices.

For the given $p^{\rm inc}$, there exists a smooth lifting $v_0$ which has a
compact support contained in $\Omega\times[0, T]$ and satisfies the boundary
conditions $v_0=p^{\rm inc}$ on $\partial B_b$. Hence we may equivalently
consider the following initial boundary value problem
\begin{subequations} \label{ibvp}
\begin{numcases}
{ }
\frac{\beta}{c^2}\partial_t^2 p-\nabla\cdot(M\nabla
p)=f \label{eq1} &\text{in } $\Omega\times(0, T]$,  \\
p=0 \label{bv1} &\text{on } $\partial B_b\times (0, T]$,\\
p|_{t=0}=g,\quad\partial_t p|_{t=0}=h \label{iv1} &\text{in } $\Omega$,\\
\mu\Delta\boldsymbol{u}+(\lambda+\mu)\nabla\nabla\cdot\boldsymbol{u}
-\rho_{2}\partial_t^{2}\boldsymbol{u}=0 \label{eq2} &\text{in } $D\times (0,
T]$,\\
\boldsymbol{u}|_{t=0}=\partial_t \boldsymbol{u}|_{t=0}=0  \label{iv2}
&\text{in } $D$,\\
\partial_{\boldsymbol{n}_{D}} p=-\rho_{1}\boldsymbol{n}_{D}\cdot\partial_t^{2}
\boldsymbol{u}, \quad -p\boldsymbol{n}_{D}=\mu \partial_{\boldsymbol{n}_{D}}
\boldsymbol{u}+(\lambda+\mu)(\nabla\cdot\boldsymbol{u})\boldsymbol{n}_{D}
\label{bv2}&\text{in }  $\partial D\times (0, T]$.
\end{numcases}
\end{subequations}
where $f\in L^2(\Omega), g\in \widetilde{H}_{0}^1(\Omega):=\{u\in H^1(\Omega):
u=0~\text{on}~\partial B_{b}\}, h\in L^2(\Omega)$.

\section{Well-posedness}

In this section, we examine the well-posedness of the reduced initial-boundary
value problem \eqref{ibvp} and present an a priori estimate for the solution.

\subsection{Existence and uniqueness}

Taking the inner products in \eqref{eq1} and \eqref{eq2} with the
test functions $q \in \widetilde{H}_{0}^{1}(\Omega)$ and $\boldsymbol{v} \in
\boldsymbol{H}^{1}(D)$, respectively, we arrive at the variational problem:
to find $(p, \boldsymbol{u}) \in \widetilde{H}^{1}_{0}(\Omega)\times
\boldsymbol{H}^{1}(D)$ for all $t > 0$ such that
\[
\frac{\beta}{c^{2}}\big(\partial_t^2{p},
q\big)_{\Omega}-\big(\nabla\cdot(M\nabla
p), q \big)_{\Omega}=(f, q)_{\Omega},\quad\forall\, q\in
\widetilde{H}_{0}^{1}(\Omega),
\]
\[
\rho_{2}\big(\partial_t^2\boldsymbol{u},
\boldsymbol{v}\big)_{D}-\big(\mu\Delta\boldsymbol{u}
+(\lambda+\mu)\nabla\nabla\cdot\boldsymbol{u}, \boldsymbol{v}
\big)_{D}=0,\quad\forall\, \boldsymbol{v} \in
\boldsymbol{H}^{1}(D).
\]
Using the integration by parts and initial and boundary conditions \eqref{iv1},
\eqref{iv2}, \eqref{bv1}, and \eqref{bv2}, we have
\[
\big(\frac{\beta}{c^{2}}\partial_t^2{p},
q\big)_{\Omega}+a_{0}[p,q;t]-\int_{\partial D} \rho_{1}(\boldsymbol{n}_{D}\cdot
\partial_t^2\boldsymbol{u}) q{\rm d}s=(f, q)_{\Omega},\quad \forall\, q\in
\widetilde{H}_{0}^{1}(\Omega),
\]
\[
\big(\rho_{2}\partial_t^2{\boldsymbol{u}},
\boldsymbol{v}\big)_{D}+a_{1}[\boldsymbol{u}, \boldsymbol{v};t]+\int_{\partial
D} p\boldsymbol{n}_{D}\cdot \boldsymbol{v} {\rm d}s=0,\quad \forall\,
\boldsymbol{v} \in \boldsymbol{H}^{1}(D),
\]
where the bilinear forms
\[a_{0}[p,q;t]=\int_\Omega (M^{1/2}\nabla p)\cdot(M^{1/2}\nabla q) {\rm d} x,
 \]
\[
 a_{1}[\boldsymbol{u}, \boldsymbol{v};t]=\mu\int_{D} (\nabla
\boldsymbol{u}):(\nabla \boldsymbol{v}) {\rm d} x
 +(\lambda+\mu)\int_{D} (\nabla \cdot \boldsymbol{u})(\nabla \cdot
\boldsymbol{v}) {\rm d} x.
\]

Suppose that $(p(x, t), \boldsymbol{u}(x, t))$ is a smooth solution of
\eqref{ibvp} and define the associated mappings ${\rm p}:[0, T]\to
\widetilde{H}_0^1(\Omega)$ and ${\bf u}:[0, T]\to \boldsymbol{H}^1(D)$
by
\begin{align*}
[{\rm p}(t)](x):&=p(x, t),
\quad x\in\Omega, ~t\in[0, T],\\
[{\mathbf u}(t)](x):&=\boldsymbol{u}(x, t),
\quad x\in D, ~t\in[0, T].
\end{align*}
Introduce the function ${\rm f}:[0, T]\to L^2(\Omega)$ by
\[
 [{\rm f}(t)](x):= f(x, t),\quad x\in\Omega, ~ t\in[0, T].
\]

We seek a weak solution $({\rm p}, {\bf u})$ satisfying $({\rm p}'', {\bf u}'')
\in H^{-1}(\Omega)\times\boldsymbol{H}^{-1}(D)$ for a.e. $t\in[0,
T]$. Hence the inner product $(\cdot,\cdot)$ can also be interpreted as the
pairing $\langle\cdot,\cdot\rangle$ which is defined between the dual spaces of
$H^{-1}$ and $H^1$.

\begin{definition}
 We say that the function $({\rm p}, {\bf u})\in L^2(0, T;
\widetilde{H}^1_0(\Omega))\times L^2(0, T; \boldsymbol{H}^{1}(D))$ with $({\rm
p}', {\bf u}')\in L^2(0, T; L^2(\Omega))\times L^2(0, T; \boldsymbol{L}^{2}(D))$
and $({\rm p}'', {\bf u}'')\in L^2(0, T; H^{-1}(\Omega))\times L^2(0, T;
\boldsymbol{H}^{-1}(D))$
is a weak solution of the initial boundary value problem \eqref{ibvp} if it
satisfies

\begin{enumerate}[label=(\roman*)]

\item $\forall {\rm q}\in \widetilde{H}_0^1(\Omega)$, $ {\bf{v}} \in
{\boldsymbol{H}}^{1}{(D)}$ and a.e. $t\in[0, T]$,
\begin{align*}
&\big(\frac{\beta}{c^{2}}{\rm p} '', \rm q\big)_{\Omega}
+\big(\rho_{1}\rho_{2}{\bf{u}} '', {\bf{v}}\big)_{D}
+a_{0}[{\rm p}, {\rm q};t]\\
&\hspace{2cm}+\rho_{1}\big(a_{1}[{\bf{u}}, {\bf{v}};t]+a_{2}[{\rm p},
{\bf{v}};t]+a_{3}[\bf{u}, \rm q;t]\big)=(\rm{ f, q})_{\Omega},
\end{align*}
where
\begin{align*}
a_{2}[{\rm p}, {\bf v}; t]&=\int_{\partial D}
{\rm p}\boldsymbol{n}_{D}\cdot {\bf v}{\rm d} s,\\
a_{3}[{\bf u}, {\rm q}; t]&=\int_{\partial D} -(\boldsymbol{n}_{D}\cdot {\bf
u}'') {\rm q} {\rm d}s.
\end{align*}

\item ${\rm p}(0)=g,~ {\rm p}'(0)=h$.

\end{enumerate}
\end{definition}

We adopt the Galerkin method to construct the weak solution of the
initial boundary value problem \eqref{ibvp} by solving a finite dimensional
approximation. We refer to \cite{E-10} for the method to construct the weak
solutions of the general second order parabolic and hyperbolic equations. The
method begins with selecting orthogonal basis
functions: select functions $w_k:=(w_k^{i}(x),w_k^{e}(x))^{\top}, k\in\mathbb
N$ by requiring that the smooth functions $\{w_k^{i}\}_{k=1}^\infty$,
$\{w_k^{e}\}_{k=1}^\infty$ is the standard
orthogonal basis of $L^2(\Omega)$ and $L^{{2}}(\partial\Omega)$ respectively,
and $\{w_k^{i}\}_{k=1}^\infty$ is also the orthogonal basis of
$H_{0}^{1}(\Omega)$; select functions $W_k:=(W_k^{i}(x),W_k^{e}(x))^{\top},
k\in\mathbb N$ by requiring that the smooth functions
$\{W_k^{i}\}_{k=1}^\infty$, $\{W_k^{e}\}_{k=1}^\infty$ is the standard
orthogonal basis of $\boldsymbol{L}^{2}(D)$ and $\boldsymbol{L}^{{2}}(\partial
D)$ respectively, and $\{W_k^{i}\}_{k=1}^\infty$ is also the orthogonal basis of
$\boldsymbol{H}_{0}^{1}(D)$.

For positive integers $s, l$, $N:=s+l$, $w_k^{e}=0\, (k=1, \dots, s)$,
$w_k^{i}=0\, (k=s+1, \dots, N)$, let
\begin{equation}\label{pmt}
 {\rm p}_N(t):=\sum_{j=1}^N p_{Nj}(t) w_j=\sum_{j=1}^s p_{Nj}^{i}(t)
w_j^{i}+\sum_{j=s+1}^N p_{Nj}^{e}(t) w_j^{e}.
\end{equation}
For positive integers $m, n$, $M:=m+n$, $W_k^{i}=0\,(k=1, \cdots, m)$,
$W_k^{e}=0\, (k=m+1, \cdots, M)$, let
\begin{equation}\label{umt}
 {\bf u}_M(t):=\sum_{j=1}^M u_{Mj}(t) W_j=\sum_{j=1}^m u_{Mj}^{i}(t)
W_j^{i}+\sum_{j=m+1}^M u_{Mj}^{e}(t) W_j^{e}.
\end{equation}
The coefficients $ p_{Nj}(t), u_{Mj}(t)$ satisfy the initial conditions
\begin{align}\label{puic}
p_{Nj}(0)=(g,  w_j),\quad p_{Nj}^{'}(0)=(h, w_j),\quad
u_{Mj}(0)=0, \quad  u_{Mj}^{'}(0)=0,
\end{align}
and ${\rm p}_N(t), {\bf u}_M(t)$ satisfy the equation
\begin{align}\label{vf6}
({\rm{f}}, w_k)_{\Omega}=
\big(\frac{\beta}{c^{2}}{\rm p}_N^{''}, w_k\big)_{\Omega}
&+\big(\rho_{1}\rho_{2}{\bf{u}}_{M} '', W_{j}\big)_{D}
+a_{0}[{\rm p}_N, w_k;t]\nonumber\\
&+\rho_{1}\big(a_{1}[{\bf{u}}_{M},  W_{j};t]+a_{2}[{\rm p}_N,
W_{j};t]+a_{3}[{\bf{u}}_{M}, w_k;t]\big),
\end{align}
for $k=1, \dots, N, j=1, \dots, M, t\in[0, T]$.

\begin{theorem}
 For each $M, N \in\mathbb N$, there exists unique functions ${\rm p}_N, {\bf
u}_M$, which are given in the form of \eqref{pmt}--\eqref{umt} and satisfy
\eqref{puic}--\eqref{vf6}.
\end{theorem}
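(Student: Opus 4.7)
The plan is to reduce the Galerkin problem to a system of linear second-order ODEs for the coefficient vectors and then invoke standard ODE theory. First I would substitute the ans\"atze \eqref{pmt}--\eqref{umt} into \eqref{vf6}, testing the scalar equation against each $w_k$, $k=1,\dots,N$ and the vector equation against each $W_i$, $i=1,\dots,M$. Writing $P(t)=(p_{N1}(t),\dots,p_{NN}(t))^\top$ and $U(t)=(u_{M1}(t),\dots,u_{MM}(t))^\top$, this produces a coupled linear system of the schematic form
\[
A_P\,P''(t)\;-\;\rho_1 C\,U''(t)\;+\;K_P\,P(t)\;=\;F(t),\qquad \rho_2 A_U\,U''(t)\;+\;K_U\,U(t)\;+\;\rho_1 C^{\top}P(t)\;=\;0,
\]
where $A_P$ has entries $((\beta/c^2)w_j,w_k)_{\Omega}$, $A_U$ has entries $(W_j,W_i)_{D}$, the stiffness matrices $K_P,K_U$ come from $a_0$ and $a_1$, and $C$ is the boundary-coupling matrix with entries $\int_{\partial D}(\boldsymbol n_D\cdot W_j)w_k\,{\rm d}s$ coming from $a_2,a_3$. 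The source vector $F$ has entries $(f,w_k)_{\Omega}$.

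Next I would verify that the block mass matrix
\[
\mathcal M\;=\;\begin{pmatrix}A_P & -\rho_1 C\\[2pt] 0 & \rho_2 A_U\end{pmatrix}
\]
is invertible. Because $\mathcal M$ is block upper triangular, invertibility follows from invertibility of its diagonal blocks. Here $A_P$ is symmetric positive definite since $\beta$ is continuous and strictly positive on $\bar\Omega$ (as recorded just after the definition of $\beta$ and $M$) and $\{w_k\}$ are linearly independent; similarly $A_U$ is SPD since $\rho_2>0$ and $\{W_k\}$ are linearly independent. Left-multiplying the system by $\mathcal M^{-1}$ then puts it in the canonical form
\[
\begin{pmatrix}P''(t)\\ U''(t)\end{pmatrix}\;=\;\mathcal A\,\begin{pmatrix}P(t)\\ U(t)\end{pmatrix}\;+\;\mathcal F(t),
\]
with a constant matrix $\mathcal A$ (the coefficients $\beta$, $M$, $\mu$, $\lambda$, $\rho_i$ being time-independent) and $\mathcal F\in L^2(0,T)^{N+M}$.

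Finally, coupling this first-order-in-$(P',U',P,U)$ reformulation with the prescribed initial data \eqref{puic}, the standard existence and uniqueness theorem for linear ODEs (Picard--Lindel\"of) yields a unique $C^1$ (in fact $C^2$) solution on $[0,T]$. Substituting back into \eqref{pmt}--\eqref{umt} produces the desired unique $({\rm p}_N,{\bf u}_M)$.

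The main obstacle I anticipate is the off-diagonal block $-\rho_1 C$ in $\mathcal M$, which is forced by the fact that the coupling form $a_3[{\bf u}_M,w_k;t]$ involves $\mathbf u_M''$ rather than $\mathbf u_M$ itself, so the mass matrix is genuinely coupled across the two solution components. The saving observation is that the reciprocal coupling form $a_2[{\rm p}_N,W_j;t]$ contains no time derivative, which keeps $\mathcal M$ block triangular and avoids any delicate spectral or coercivity argument at this stage; the serious coercivity work is deferred to the energy estimate needed to pass $N,M\to\infty$.
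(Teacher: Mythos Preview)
Your proposal is correct and follows essentially the same route as the paper: reduce \eqref{vf6} to a linear second-order ODE system for the coefficient vectors, observe that the resulting mass matrix is block triangular with invertible diagonal blocks (because only $a_3$, not $a_2$, carries a second time derivative), and then invoke the standard existence-uniqueness theorem for linear ODEs. The only cosmetic differences are that the paper orders the unknowns as $(\mathbf u,\mathrm p)$ so its mass matrix $A$ appears block \emph{lower} triangular, and it exploits the orthonormality of its chosen bases so that the diagonal mass blocks are scalar multiples of the identity rather than generic SPD Gram matrices.
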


\begin{proof}
 Since $\{w_k^{i}\}_{k=1}^\infty$ and $\{w_k^{e}\}_{k=1}^\infty$ are the
orthogonal bases of $L^2(\Omega)$ and $L^{{2}}(\partial\Omega)$;
$\{W_k^{i}\}_{k=1}^\infty$ and $\{W_k^{e}\}_{k=1}^\infty$ are the orthogonal
bases of $\boldsymbol{L}^{2}(D)$ and $\boldsymbol{L}^{{2}}(\partial D)$, we have
from \eqref{pmt}--\eqref{umt} that
\begin{align}
\label{pN2} ({\rm p}_N^{''}(t), w_k)_{\Omega}&=p_{Nk}^{''}(t), \quad k=1, \dots,
N,\\
\label{uN2} ({\bf u}_M^{''}(t), W_j)_{D}&=u_{Mj}^{''}(t), \quad j=1, \dots, M.
\end{align}
It follows from \eqref{vf6} that
\begin{align}
\label{a0} a_{0}[{\rm p}_N, w_k;t]&=\sum_{j=1}^{N}d_{k}^{j}(t)p_{Nj}(t),\\
\label{a1} a_{1}[{\bf u}_M, W_k;t]&=\sum_{j=1}^{M}c_{k}^{j}(t)u_{Mj}(t),
\end{align}
where $d_{k}^{j}(t)=a_{0}[w_j, w_k; t]$, $j, k=1, \dots, N$ and
$c_{k}^{j}(t)=a_{1}[W_j, W_k; t]$, $j, k=1, \dots, M$. Define
$D=[d_{k}^{j}]_{N\times N}$ and $C=[c_{k}^{j}]_{M\times M}$.

Recall that the matrix $M$ is symmetric positive definite. It follows from
the definition of $a_{0}[p,q;t]$ and $a_{1}[\boldsymbol{u}, \boldsymbol{v};t]$
that there exists  positive constants $C_j, j=1, \dots, 4$ such that
\[
 C_1\|p\|^2_{H^1(\Omega)}\leq | a_{0}[p,p;t]| \leq
C_2\|p\|^2_{H^1(\Omega)},\quad\forall\, p\in H_0^1(\Omega)
\]
and
\[
 C_3\|\boldsymbol{u}\|^2_{\boldsymbol{H}^1(D)}\leq | a_{1}[\boldsymbol{u},
\boldsymbol{u};t]| \leq
C_4\|\boldsymbol{u}\|^2_{\boldsymbol{H}^1(D)},\quad\forall\, \boldsymbol{u} \in
{\boldsymbol{H}_{0}^1(D)},
\]
which imply that the bilinear forms $a_{0}$ and $a_{1}$ are coercive in
$H_0^1(\Omega)$ and ${\boldsymbol{H}_{0}^1(D)}$, respectively, i.e., there
exists a positive constant $C$ such that
\begin{align*}
 a_{0}[p, p; t] & \geq C\|p\|^2_{H^1(\Omega)},\quad\forall\, p\in
H_0^1(\Omega),\\
 a_{1}[\boldsymbol{u}, \boldsymbol{u};t] & \geq
C\|\boldsymbol{u}\|^2_{\boldsymbol{H}^1(D)},\quad\forall\, \boldsymbol{u}\in
\boldsymbol{H}_{0}^1(D).
\end{align*}

Similarly, we have from \eqref{vf6} that
\begin{align}
\label{a2} a_{2}[{\rm p}_N,
W_k;t] &=\rho_{1}\sum_{j=1}^{N}e_{k}^{j}(t)p_{Nj}(t),\\
\label{a3} a_{3}[{\bf u}_M,
w_k;t] &=\rho_{1}\sum_{j=1}^{M}l_{k}^{j}(t)u_{Mj}^{''},
\end{align}
where $e_{k}^{j}(t)=(w_j\boldsymbol{n}_{D}, W_k)_{\partial D}$, $j=1, \dots, N,
k=1, \dots, M$ and $l_{k}^{j}(t)=(\boldsymbol{n}_{D}\cdot W_j,w_k)_{\partial
D}$, $j=1, \dots, M, k=1, \dots, N$. Define $E=[e_{k}^{j}]_{n\times
l}$ and $L=[l_{k}^{j}]_{l\times n}$.

Let
\[
 f^k(t)=( {\rm f}(t), w_k)_{\Omega},\quad k=1, \dots, N.
\]
Substituting \eqref{pN2}--\eqref{a3} into \eqref{vf6}, we
obtain a linear system of second order equations
\begin{equation}\label{ode}
 A{\bf U}''_{\overline{M}}+B{\bf U}_{\overline{M}}=F
\end{equation}
subject to the initial conditions \eqref{puic}, where ${\overline{M}}=M+N$,
\[
A=\begin{array}{c@{\hspace{-5pt}}l}
\left(\begin{array}{cc;{2pt/2pt}c;{2pt/2pt}cc;{2pt/2pt}c}
\rho_{1}\rho_{2}I_{m\times m}
&  &  &  &  &  \\ \hdashline[2pt/2pt]
&  &  &  &  \\
& & \rho_{1}\rho_{2}I_{n\times n}& & & \\ \hdashline[2pt/2pt]
& & & \frac{\beta}{c^2}I_{s\times s} &  &  \\
& & & & &  \\ \hdashline[2pt/2pt]
&  &L_{l\times n}&  &  & \frac{\beta}{c^2}I_{l\times l}
\end{array}\right)
& \begin{array}{l}
\left.\rule{0mm}{7mm}\right\}M\\
\\\left.\rule{0mm}{7mm}\right\}N
\end{array}\\[-5pt]
\begin{array}{cc}
\underbrace{\rule{40mm}{0mm}}_M&
\underbrace{\rule{30mm}{0mm}}_N\end{array} &
\end{array},
\]
\[
B=\begin{array}{c@{\hspace{-5pt}}l}
\left(\begin{array}{cc;{2pt/2pt}c;{2pt/2pt}cc;{2pt/2pt}c}
C_{m\times m}
&  &  &  &  &  \\ \hdashline[2pt/2pt]
&  &  &  &  \\
& & C_{n\times n}& & &E_{n\times l}  \\ \hdashline[2pt/2pt]
& & & D_{s\times s} &  &  \\
& & & & &  \\ \hdashline[2pt/2pt]
&  &&  &  & D_{l\times l}
\end{array}\right)
& \begin{array}{l}
\left.\rule{0mm}{7mm}\right\}M\\
\\\left.\rule{0mm}{7mm}\right\}N
\end{array}\\[-5pt]
\begin{array}{cc}
\underbrace{\rule{28mm}{0mm}}_M&
\underbrace{\rule{26mm}{0mm}}_N\end{array} &
\end{array},
\]
\[
\begin{array}{cc}
{\bf U}_{\overline{M}}=&({u}_{M1}^{i},\cdots,{u}_{Mm}^{i};
{u}_{M(m+1)}^{e},\cdots, {u}_{MM}^{e};
{p}_{N1}^{i},\cdots, {p}_{Ns}^{i};{p}_{N(s+1)}^{e},\cdots,
{p}_{NN}^{e})^{\top}\\
&\underbrace{\rule{27mm}{0mm}}_m
\underbrace{\rule{33mm}{0mm}}_n
\underbrace{\rule{23mm}{0mm}}_s
\underbrace{\rule{32mm}{0mm}}_l
\end{array} ,
\]
\[
\begin{array}{cc}
F=&(0,\cdots,0;0,\cdots, 0;
f^{1},\cdots, f^{s};0,\cdots, 0)^{\top}\\
&\underbrace{\rule{14mm}{0mm}}_m
\underbrace{\rule{14mm}{0mm}}_n
\underbrace{\rule{18mm}{0mm}}_s
\underbrace{\rule{14mm}{0mm}}_l
\end{array},
\]
Since $A$ is invertible, it follows from the standard theory of ordinary
differential equations that there exists a unique $C^2$ function ${\bf
U}_{\overline{M}}(t)$ consisting of ${\rm p}_N$ and ${\bf u}_M$ which satisfy
\eqref{puic}--\eqref{vf6} for $t\in[0, T]$.
\end{proof}

Define two product spaces
\begin{align*}
\mathcal{H}^{1}:&=\boldsymbol{H}^{1}(D)\times \boldsymbol{L}^{2}(\partial D)
\times H^1(\Omega)\times L^{2}(\partial\Omega),\\
\mathcal{L}^{2}:&=\boldsymbol{L}^{2}(D)\times \boldsymbol{L}^{2}(\partial D)
\times L^2(\Omega)\times L^{2}(\partial\Omega).
\end{align*}
Let ${U}=({\bf u}^{i};{\bf u}^{e}; {\rm p}^{i};{\rm p}^{e})^{\top}$. The
norms of $U$ in $\mathcal H^1$ and $\mathcal L^2$ are defined by
\begin{align*}
\|U\|_{\mathcal{H}^{1}}^2 &=\|{\bf u}^{i}\|_{\boldsymbol{H}^{1}(D)}^2+\|{\bf
u}^{e}\|_{\boldsymbol{L}^{2}(\partial D)}^2+
\|{\rm p}^{i}\|_{H^1(\Omega)}^2+\|{\rm p}^{e}\|_{L^{2}(\partial\Omega)}^2,\\
\|U\|_{\mathcal{L}^{2}}^2 &=\|{\bf u}^{i}\|_{\boldsymbol{L}^{2}(D)}^2+\|{\bf
u}^{e}\|_{\boldsymbol{L}^{2}(\partial D)}^2+ \|{\rm
p}^{i}\|_{L^2(\Omega)}^2+\|{\rm p}^{e}\|_{L^2(\partial\Omega)}^2.
\end{align*}
Let $\|\cdot\|_{op}$ denote the operator norm.

\begin{theorem}\label{s2-t3}
 There exists a positive constant $C$ depending only on $\Omega, D, T,$ and the
coefficients of the acoustic-elastic interaction problem \eqref{ibvpo} such that
 \begin{align*}
&  \max_{t\in[0, T]}\left(\|{\bf
u}'_{\overline{M}}(t)\|^2_{\mathcal{L}^2}+\|{\bf
u}_{\overline{M}}(t)\|^2_{\mathcal{H}^1}
 \right)+\|{\bf u}''_{\overline{M}}(t)\|^{2}_{L^2(0, T;\mathcal{H}^{-1})}\\
&\qquad\leq C\left(\|f\|^{2}_{L^2(0, T; L^2(\Omega))}+\|
g\|^{2}_{H^1(\Omega)}+\|h\|^{2}_{L^2(\Omega)} \right),\quad \overline{M}=1, 2,
\dots,
 \end{align*}
 where ${\bf u}_{\overline{M}}=({\bf u}_M, {\rm p}_N)^{\top}$.
\end{theorem}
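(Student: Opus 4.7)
The plan is to run the standard energy method on the Galerkin system \eqref{vf6} by testing against the time derivative of the approximate solution, then convert the resulting identity into a Gronwall-type estimate. Concretely, I would multiply the equation for $p_{Nk}$ by $p_{Nk}'(t)$ and the equation for $u_{Mj}$ by $u_{Mj}'(t)$, then sum over $k=1,\dots,N$ and $j=1,\dots,M$. Because $\{w_k\}$, $\{W_j\}$ are bases of the relevant spaces, this is equivalent to choosing $q={\rm p}_N'$ in the acoustic part and $\boldsymbol v={\bf u}_M'$ in the elastic part. The diagonal inertial and stiffness terms then collapse into a single time derivative
\[
\frac{1}{2}\frac{d}{dt}\Bigl[\bigl\|\tfrac{\beta^{1/2}}{c}{\rm p}_N'\bigr\|_{L^2(\Omega)}^2+\rho_1\rho_2\|{\bf u}_M'\|_{\boldsymbol L^2(D)}^2+a_0[{\rm p}_N,{\rm p}_N;t]+\rho_1 a_1[{\bf u}_M,{\bf u}_M;t]\Bigr],
\]
since $M$ and the elastic coefficients are time-independent. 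The coercivity estimates on $a_0$ and $a_1$ already stated in the paper will then control $\|{\rm p}_N\|_{H^1(\Omega)}^2$ and $\|{\bf u}_M\|_{\boldsymbol H^1(D)}^2$ from below.

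The substantive work lies in the two coupling contributions $\rho_1 a_2[{\rm p}_N,{\bf u}_M';t]=\rho_1\int_{\partial D}{\rm p}_N\,\boldsymbol n_D\!\cdot\!{\bf u}_M'\,ds$ and $\rho_1 a_3[{\bf u}_M,{\rm p}_N';t]=-\rho_1\int_{\partial D}(\boldsymbol n_D\!\cdot\!{\bf u}_M'')\,{\rm p}_N'\,ds$, which are not of definite sign and, worse, the second one carries two time derivatives on $\bf u$ on the boundary. My plan is to integrate the full identity in $t$ over $[0,\tau]$ for arbitrary $\tau\in[0,T]$ and then transfer one time derivative in the $a_3$ term by integration by parts in $t$:
\[
-\rho_1\int_0^\tau\!\!\int_{\partial D}(\boldsymbol n_D\!\cdot\!{\bf u}_M'')\,{\rm p}_N'\,ds\,dt=-\rho_1\!\int_{\partial D}(\boldsymbol n_D\!\cdot\!{\bf u}_M'(\tau)){\rm p}_N'(\tau)\,ds+\rho_1\!\int_0^\tau\!\!\int_{\partial D}(\boldsymbol n_D\!\cdot\!{\bf u}_M')\,{\rm p}_N''\,ds\,dt,
\]
where the boundary term at $t=0$ vanishes by the zero initial data \eqref{puic}. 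The remaining double time integral still contains ${\rm p}_N''$, so I would integrate by parts once more, this time moving the derivative onto $\bf u_M'$, producing a harmless triple integral plus another boundary-in-time contribution $\int_{\partial D}(\boldsymbol n_D\!\cdot\!{\bf u}_M'(\tau)){\rm p}_N'(\tau)\,ds$ that combines with the one already present. An entirely analogous rearrangement on $a_2$ gives boundary-in-time terms controlled by the trace theorem: by the continuity $H^1(\Omega)\hookrightarrow L^2(\partial D)$ and $\boldsymbol H^1(D)\hookrightarrow \boldsymbol L^2(\partial D)$ together with Young's inequality $|ab|\le\varepsilon a^2+C_\varepsilon b^2$, every such term is absorbed into a small fraction of the energy plus a time integral of the energy.

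With these manipulations, the identity becomes
\[
E(\tau)\le C\bigl(\|g\|_{H^1(\Omega)}^2+\|h\|_{L^2(\Omega)}^2+\|f\|_{L^2(0,T;L^2(\Omega))}^2\bigr)+C\int_0^\tau E(t)\,dt,
\]
where $E(t)$ is the full energy controlling $\|{\bf u}_M'\|_{\boldsymbol L^2(D)}^2+\|{\rm p}_N'\|_{L^2(\Omega)}^2+\|{\bf u}_M\|_{\boldsymbol H^1(D)}^2+\|{\rm p}_N\|_{H^1(\Omega)}^2$. An application of Gronwall's inequality yields a bound on $\max_{t\in[0,T]}E(t)$; the extra boundary ingredients $\|{\bf u}_M^{e}\|_{\boldsymbol L^2(\partial D)}^2$, $\|{\rm p}_N^{e}\|_{L^2(\partial\Omega)}^2$ and the analogous ones for time derivatives on $\partial D$, $\partial\Omega$ that appear in $\|\cdot\|_{\mathcal L^2}$ and $\|\cdot\|_{\mathcal H^1}$ are then recovered from the trace theorem.

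For the $L^2(0,T;\mathcal H^{-1})$ bound on ${\bf u}_{\overline M}''$ I would use duality: fix $V\in\mathcal H^1$ with $\|V\|_{\mathcal H^1}\le 1$, split $V$ into its projection onto $\mathrm{span}\{w_k,W_j\}$ and its orthogonal complement, use the Galerkin equation \eqref{vf6} to evaluate $\langle{\bf u}_{\overline M}'',V\rangle$ (only the projected part contributes), and estimate the resulting expression by $a_0,a_1,a_2,a_3$ acting on quantities already controlled by $E(t)^{1/2}$. Integrating the square in $t$ and inserting the energy bound just obtained then yields the stated estimate. The main obstacle throughout is the pairing of $\boldsymbol n_D\!\cdot\!{\bf u}''$ against a trace of $p$ on $\partial D$: it has to be neutralised by two successive time integrations by parts and careful use of the trace inequality so that no term with two time derivatives survives on the boundary, after which the argument reduces to Gronwall.
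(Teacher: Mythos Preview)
Your plan diverges from the paper's argument and, as written, contains a genuine gap in the treatment of the coupling term $a_3$.

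The paper does \emph{not} test the variational formulation against $({\rm p}_N',{\bf u}_M')$ at all. Instead it works entirely at the level of the coefficient vector ${\bf U}_{\overline M}$ and the finite-dimensional ODE $A{\bf U}''_{\overline M}+B{\bf U}_{\overline M}=F$. The crucial observation is that the troublesome bilinear form $a_3[{\bf u}_M,w_k;t]=-\rho_1\int_{\partial D}(\boldsymbol n_D\cdot{\bf u}_M'')\,w_k\,ds$, which carries two time derivatives of $\bf u$ on the boundary, is absorbed into the \emph{mass} matrix $A$ (as the off-diagonal block $L$), not into the stiffness matrix $B$. Because $A$ is lower block-triangular with invertible diagonal blocks, $A^{-1}$ exists, and the paper simply takes the Euclidean inner product of $ {\bf U}''_{\overline M}+A^{-1}B{\bf U}_{\overline M}=A^{-1}F$ with ${\bf U}'_{\overline M}$, then with ${\bf U}_{\overline M}$, and runs Gronwall on $\|{\bf U}_{\overline M}\|^2+\|{\bf U}'_{\overline M}\|^2$. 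Parseval's identity converts these coefficient norms into the $\mathcal H^1$ and $\mathcal L^2$ norms of ${\bf u}_{\overline M}$. No boundary integration by parts in time, no trace estimates on time derivatives.

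Your strategy, by contrast, breaks down at the $a_3$ term. The two successive integrations by parts in $t$ you describe are circular: starting from $I=-\rho_1\int_0^\tau\!\int_{\partial D}(\boldsymbol n_D\!\cdot\!{\bf u}_M''){\rm p}_N'\,ds\,dt$, your first step gives $I=-[\text{bdry}]+J$ with $J=\rho_1\int_0^\tau\!\int_{\partial D}(\boldsymbol n_D\!\cdot\!{\bf u}_M'){\rm p}_N''\,ds\,dt$, and your second step ``moving the derivative onto ${\bf u}_M'$'' returns $J=[\text{bdry}]+I$. Substituting back yields the tautology $I=I$; no ``harmless triple integral'' appears. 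Even the boundary-in-time term $-\rho_1\int_{\partial D}(\boldsymbol n_D\!\cdot\!{\bf u}_M'(\tau)){\rm p}_N'(\tau)\,ds$ that survives after one integration by parts is not controllable by your energy: the trace inequality would require $\|{\bf u}_M'(\tau)\|_{\boldsymbol H^1(D)}$ and $\|{\rm p}_N'(\tau)\|_{H^1(\Omega)}$, whereas $E(t)$ contains only the $L^2$ norms of these time derivatives. Testing with $({\rm p}_N',{\bf u}_M')$ simply does not produce a closed estimate for this system; the exact boundary cancellation that occurs in the later stability proof (Theorem~\ref{s3-t1}) relies on pairing $\partial_t p$ with $\partial_t^2\boldsymbol u$ on the \emph{time-differentiated} elastic equation, which is a different setup.
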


\begin{proof}
It is easy to see that the lower triangular matrix $A$ has a bounded inverse
$A^{-1}$. We have from \eqref{ode} that
\begin{equation}\label{ode1}
 {\bf U}''_{\overline{M}}+A^{-1}B{\bf U}_{\overline{M}}=A^{-1}F.
\end{equation}
Taking the inner product with ${\bf U}'_{\overline{M}}$ on both sides
of \eqref{ode1} yields
\begin{equation}\label{s2-t3-1}
({\bf U}''_{\overline{M}},{\bf U}'_{\overline{M}})+(A^{-1}B{\bf
U}_{\overline{M}},{\bf U}'_{\overline{M}})=(A^{-1}F,{\bf U}'_{\overline{M}})
\quad\text{for a.e.} ~t\in[0, T].
\end{equation}
Observe that
\begin{equation}\label{s2-t3-2}
 ({\bf U}''_{\overline{M}},{\bf U}'_{\overline{M}})=\frac{\rm d}{{\rm
d}t}\left(\frac{1}{2}\|{\bf U}'_{\overline{M}}\|^2 \right).
\end{equation}
Combining \eqref{s2-t3-1}--\eqref{s2-t3-2} and using the Cauchy--Schwarz
inequality, we obtain
\begin{align}\label{s2-t3-3}
\frac{1}{2}\frac{\rm d}{{\rm
d}t}\|{\bf U}'_{\overline{M}}\|^2
 &\leq |(A^{-1}B{\bf U}_{\overline{M}},{\bf U}'_{\overline{M}})|+|(A^{-1}F,{\bf
U}'_{\overline{M}})| \notag\\
 &\leq \frac{1}{2} \left(\|A^{-1}B{\bf U}_{\overline{M}}\|^2+\|{\bf
U}'_{\overline{M}}\|^2+\|A^{-1}F\|^2+\|{\bf U}'_{\overline{M}}\|^2\right)
\notag\\
&\leq  \frac{1}{2}\|A^{-1}B\|^2 \|{\bf U}_{\overline{M}}\|^2+\|{\bf
U}'_{\overline{M}}\|^2+\frac{1}{2}\|A^{-1}F\|^2.
\end{align}
It is clear to note that
\begin{align}\label{s2-t3-4}
\frac{1}{2}\frac{\rm d}{{\rm
d}t}\|{\bf U}_{\overline{M}}\|^2
 \leq |({\bf U}_{\overline{M}},{\bf U}'_{\overline{M}})|
  \leq \frac{1}{2} \left(\|{\bf U}'_{\overline{M}}\|^2+\|{\bf
U}_{\overline{M}}\|^2\right).
\end{align}
Using \eqref{s2-t3-3} and \eqref{s2-t3-4}, we may consider the inequality
\[
 \alpha'(t)\leq C_1\alpha(t)+\delta(t),\quad t\in[0, T],
\]
where $\alpha(t)=\|{\bf U}'_{\overline{M}}\|^2+\|{\bf U}_{\overline{M}}\|^2$,
$C_1=\max\big\{(1+\|A^{-1}B\|^2_{op}),3\big\}$, $\delta(t)=\|A^{-1}F\|$.
It follows from the Gronwall inequality that
\[
 \alpha(t)\leq e^{C_1 t}\left(\alpha(0)+\int_0^t \delta(s){\rm d}s \right),
\quad
t\in[0, T].
\]
A simple calculation yields
\begin{equation}\label{s2-t3-5}
\alpha(t)\leq e^{C T}\left(\alpha(0)+\|f\|_{L^{2}[0,T;{L^2(\Omega)}]}\right),
\end{equation}
where
\[
 \alpha(0)=\|{\bf U}'_{\overline{M}}(0)\|^2+\|{\bf U}_{\overline{M}}(0)\|^2 \leq
\left(\|h\|^2_{L^2(\Omega)}+\|g\|^2_{H^1(\Omega)}\right).
\]
By Parseval's equality, we have
\[
\|F\|^2=\sum_{k=1}^{m}|(f,w_{k})|^2\leq\|f\|^2_{L^2(\Omega)}
\]
and
\[
\|{\bf U}'_{\overline{M}}(0)\|^2+\|{\bf U}_{\overline{M}}(0)\|^2
=\sum_{k=1}^{m}|(h,w_{k})|^2+\sum_{k=1}^{m}|(g,w_{k})|^2
\leq\|h\|^2_{L^2(\Omega)}+\|g\|^2_{H^1(\Omega)}.
\]
In fact, we may have from straightforward calculations that
\begin{align}\label{Uu}
\|{\bf u}_{\overline{M}}(t)\|^2_{\mathcal{H}^1}&=\int_{D}
\bigg(\sum_{j=1}^m u^i_{Mj}(t) W_j^i\cdot \sum_{k=1}^m u_{Mk}^i(t) W_k^i\bigg)+
\bigg(\sum_{j=1}^m u^i_{Mj}(t) \nabla W_j^i : \sum_{k=1}^m u_{Mk}^i(t)
\nabla W_k^i\bigg){\rm d} x\notag\\
&\quad+\int_{\Omega}
\bigg(\sum_{j=M+1}^{M+s} p^i_{Nj}(t) w_j^i \sum_{k=M+1}^{M+s} p_{Nk}^i(t)
w_k^i\bigg)+\bigg(\sum_{j=M+1}^{M+s} p^i_{Nj}(t) \nabla w_j^i
\cdot\sum_{k=M+1}^{M+s} p_{Nk}^i(t) \nabla w_k^i\bigg) {\rm d} x\notag\\
&\quad+\sum_{j=m+1}^M |u^{e}_{Mj}(t)|^2
\|W_j^e\|^{2}_{\boldsymbol{L}^{2}(\partial
D)} +\sum_{j=M+s+1}^{\overline{M}} |p^{e}_{Nj}(t)|^2
\|w_j^e\|^{2}_{L^{2}(\partial \Omega)}\nonumber \\
&=\sum_{k=1}^{\overline{M}}|{\bf U}^k_{\overline{M}}|^2=\|{\bf
U}_{\overline{M}}\|^2.
\end{align}
Similarly,
\begin{equation}\label{Uu12}
\|{\bf u}_{\overline{M}}^{'}(t)\|^2_{\mathcal{L}^2}=\|{\bf
U'}_{\overline{M}}\|^2.
\end{equation}
Combining \eqref{s2-t3-5}--\eqref{Uu12} leads to
\[
 \|{\bf u}'_{\overline{M}}\|^2_{\mathcal{L}^2}+\|{\bf
u}_{\overline{M}}\|^2_{\mathcal{H}^1}
 \leq C\left(\|g\|^2_{H^1(\Omega)}+\|
h\|^2_{L^2(\Omega)}+\|f\|^2_{L^2(0, T; L^2(\Omega))} \right).
\]
Noting that $t\in[0, T]$ is arbitrary, it follows
\begin{equation}\label{s2-t3-7}
 \max_{t\in[0, T]}\left(\|{\bf u}'_{\overline{M}}\|^2_{\mathcal{L}^2}+\|{\bf
u}_{\overline{M}}\|^2_{\mathcal{H}^1}
 \right)\leq C\left(\|
g\|^2_{H^1(\Omega)}+\|h\|^2_{L^2(\Omega)}+\|f\|^2_{L^2(0, T;
L^2(\Omega))} \right).
\end{equation}

For any ${\bf{v}} \in {\mathcal{H}^{1}}, \|{\bf{v}}\|_{\mathcal{H}^{1}}\leq 1$,
let ${\bf{v}}={\bf{v}}_1+{\bf{v}}_2$, where
${\bf{v}}_1\in {\rm span}\{\widetilde{W}_1, \dots,
\widetilde{W}_{\overline{M}}\}$ and $({\bf{v}}_2, \widetilde{W}_k)=0, k=1,
\dots, {\overline{M}}$, where $\widetilde{W}_k:=(W_k(x),w_k(x))^{\top}$,
$w_k(x)=0  ~ (k=1, \cdots, M)$, $W_k(x)=0 ~ (k=M+1, \cdots,
\overline{M})$. Note that
\[
 {\bf v}_1(t)={\displaystyle \sum_{j=1}^{\overline{M}} v_{\overline{M}j}(t)
\widetilde{W}_j},\quad \|{\bf{v}}_1\|_{\mathcal{H}^{1}}\leq 1.
\]
Let ${\bf {V}}_{\overline{M}}=(v_{\overline{M}1},\cdots
v_{\overline{M}\overline{M}}).$  By the definition of the operator norm
\begin{equation}\label{dual}
 \|{\bf u}''_{\overline{M}}\|_{\mathcal{H}^{-1}}=\sup_{\|{\bf v}\|=1}\langle
{\bf u}''_{\overline{M}},{\bf v} \rangle=\sup_{\|{\bf v}\|=1}\langle {\bf
u}''_{\overline{M}},{\bf v}_{1} \rangle.
\end{equation}
It follows from \eqref{pmt}, \eqref{umt}, and \eqref{ode1} that
\begin{align*}
\langle {\bf u}''_{\overline{M}},{\bf v} \rangle &=({\bf
u}''_{\overline{M}},{\bf
v})_{\mathcal{H}^{1}}=({\bf u}''_{\overline{M}},{\bf
v}_{1})_{\mathcal{H}^{1}}\notag\\
&=({\bf U}''_{\overline{M}},{\bf {V}}_{\overline{M}})
\leq C\left(\|f\|_{L^2(\Omega)}+\|{\bf U}_{\overline{M}}\| \right)
\quad\text{for a.e.} ~t\in[0, T].
\end{align*}
In fact, we may easily verify that
\begin{align}\label{uv}
({\bf u}''_{\overline{M}},{\bf v}_{1})_{\mathcal{H}^{1}}
&=\bigg(\sum_{l=1}^{\overline{M}} u_{\overline{M}l}(t) \widetilde{W}_l,\,
\sum_{j=1}^{\overline{M}} v_{\overline{M}j}(t)
\widetilde{W}_j\bigg)_{\mathcal{H}^{1}} =({\bf U}''_{\overline{M}},{\bf
{V}}_{\overline{M}})\\ \nonumber
& \leq  \|{\bf U}''_{\overline{M}}\| \|{\bf {V}}_{\overline{M}}\| \leq
\|A^{-1}F-A^{-1}B{\bf U}_{\overline{M}}\|
\leq C\left(\|f\|_{L^2(\Omega)}+\|{\bf U}_{\overline{M}}\| \right).
\end{align}
Following from \eqref{Uu}, \eqref{dual}, and \eqref{uv} gives
\[
 \|{\bf u}''_{\overline{M}}\|_{\mathcal{H}^{-1}}\leq
C\left(\|f\|_{L^2(\Omega)}+\|{\bf u}_{\overline{M}}\|_{\mathcal{H}^{1}} \right).
\]
Hence,
\begin{align}\label{s2-t3-9}
 \int_0^T \|{\bf u}''_{\overline{M}}\|^2_{\mathcal{H}^{-1}}{\rm d}t&\leq
C\int_0^T\left(\|f\|_{L^2(\Omega)}+\|{\bf u}_{\overline{M}}\|_{\mathcal{H}^{1}}
\right)^2{\rm d}t\notag\\
&\leq C\left(\|g\|^2_{H^1(\Omega)}+\|h\|^2_{L^2(\Omega)}+\|f\|^2_{L^2(0, T;
L^2(\Omega))}\right).
\end{align}
The proof is completed after combining \eqref{s2-t3-7} and \eqref{s2-t3-9}.
\end{proof}

Now we pass to limits in the Galerkin approximations to obtain the existence of
a weak solution.

\begin{theorem}
There exists a weak solution of the initial boundary value problem \eqref{ibvp}.
\end{theorem}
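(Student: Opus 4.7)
The plan is to follow the standard Galerkin scheme of Evans (as cited in [E-10]): extract a weak limit of the sequence $\{\mathbf{u}_{\overline{M}}\}$ built in the previous theorem, pass to the limit in the Galerkin identity against test functions that are finite linear combinations of the basis, then upgrade by density and verify the initial data. The a priori bound proved in Theorem~\ref{s2-t3} is exactly what we need as input.

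First, from the uniform estimate
\[
\max_{t\in[0,T]}\left(\|\mathbf{u}'_{\overline{M}}\|^2_{\mathcal{L}^2}+\|\mathbf{u}_{\overline{M}}\|^2_{\mathcal{H}^1}\right)+\|\mathbf{u}''_{\overline{M}}\|^2_{L^2(0,T;\mathcal{H}^{-1})}\le C\bigl(\|f\|^2_{L^2(0,T;L^2(\Omega))}+\|g\|^2_{H^1(\Omega)}+\|h\|^2_{L^2(\Omega)}\bigr),
\]
the sequences $\{\mathbf{u}_{\overline{M}}\}$, $\{\mathbf{u}'_{\overline{M}}\}$, $\{\mathbf{u}''_{\overline{M}}\}$ are bounded in $L^2(0,T;\mathcal{H}^1)$, $L^2(0,T;\mathcal{L}^2)$, and $L^2(0,T;\mathcal{H}^{-1})$ respectively. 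By Banach--Alaoglu and the reflexivity of these Bochner spaces, I can extract a subsequence (not relabeled) such that
\[
\mathbf{u}_{\overline{M}}\rightharpoonup \mathbf{u}\text{ in } L^2(0,T;\mathcal{H}^1),\qquad \mathbf{u}'_{\overline{M}}\rightharpoonup \mathbf{u}'\text{ in }L^2(0,T;\mathcal{L}^2),\qquad \mathbf{u}''_{\overline{M}}\rightharpoonup \mathbf{u}''\text{ in }L^2(0,T;\mathcal{H}^{-1}),
\]
writing the limit in the form $\mathbf{u}=(\mathbf{u},\mathrm{p})^\top$ to match the splitting.

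Next I would fix an integer $K$ and choose a test function of the form $\widetilde{v}(t)=\sum_{k=1}^{K}d_k(t)\widetilde{W}_k$ with $d_k\in C^1([0,T])$. For every $\overline{M}\ge K$ the Galerkin identity \eqref{vf6}, multiplied by $d_k$, summed, and integrated in time, gives
\[
\int_0^T\Big[\bigl(\tfrac{\beta}{c^{2}}\mathrm{p}_N'',{\rm q}\bigr)_{\Omega}+\bigl(\rho_1\rho_2\mathbf{u}_M'',\mathbf{v}\bigr)_{D}+a_0[\mathrm{p}_N,{\rm q};t]+\rho_1\bigl(a_1[\mathbf{u}_M,\mathbf{v};t]+a_2[\mathrm{p}_N,\mathbf{v};t]+a_3[\mathbf{u}_M,{\rm q};t]\bigr)\Big]\,dt=\int_0^T({\rm f},{\rm q})_{\Omega}\,dt,
\]
where $({\rm q},\mathbf{v})$ is the component representation of $\widetilde{v}(t)$. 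All terms are linear in the Galerkin sequence, so the weak convergences above, together with the symmetry and boundedness of the bilinear forms $a_0$ and $a_1$, are enough to pass to the limit in the interior terms. The main obstacle is the coupling boundary forms $a_2$ and $a_3$: they involve the $\partial D$-trace of $\mathrm{p}_N$ and the second time derivative of $\mathbf{u}_M$ on $\partial D$. To handle $a_3$ I interpret it through the duality pairing consistent with the definition, while for $a_2$ I use the Aubin--Lions compactness lemma: since $\mathbf{u}_{\overline{M}}$ is bounded in $L^2(0,T;\mathcal{H}^1)$ and $\mathbf{u}'_{\overline{M}}$ in $L^2(0,T;\mathcal{L}^2)$, a subsequence converges strongly in $L^2(0,T;\mathcal{L}^2)$, hence the trace on $\partial D$ converges strongly in $L^2(0,T;L^2(\partial D))$, which legitimizes passage to the limit in the boundary pairing. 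Taking the limit $\overline{M}\to\infty$ yields the identity of Definition~(i) against every $\widetilde{v}$ of the above form, and since finite linear combinations of $\{\widetilde{W}_k\}$ are dense in $\mathcal{H}^1$, a standard density argument extends it to arbitrary test functions and to a.e. $t\in[0,T]$.

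Finally, I verify the initial conditions $\mathrm{p}(0)=g$ and $\mathrm{p}'(0)=h$. From the regularity $\mathbf{u}\in L^2(0,T;\mathcal{H}^1)$, $\mathbf{u}'\in L^2(0,T;\mathcal{L}^2)$, $\mathbf{u}''\in L^2(0,T;\mathcal{H}^{-1})$ it follows (by a standard embedding) that $\mathbf{u}\in C([0,T];\mathcal{L}^2)$ and $\mathbf{u}'\in C([0,T];\mathcal{H}^{-1})$, so the traces $\mathbf{u}(0)$ and $\mathbf{u}'(0)$ are well defined. Choosing $\widetilde{v}\in C^2([0,T];\mathcal{H}^1)$ with $\widetilde{v}(T)=\widetilde{v}'(T)=0$, integrating the weak equation by parts twice in $t$, and comparing with the analogous identity obtained by integrating the Galerkin equation by parts (using the initial data \eqref{puic}) identifies the boundary contributions at $t=0$; since $\widetilde{v}(0)$ and $\widetilde{v}'(0)$ are arbitrary in the span of the bases, which is dense, we conclude $\mathrm{p}(0)=g$, $\mathrm{p}'(0)=h$, and similarly $\mathbf{u}(0)=0$, $\mathbf{u}'(0)=0$. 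This gives a weak solution as required, with the boundary passage on $\partial D$ being the only genuinely delicate step.
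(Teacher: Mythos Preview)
Your proposal is correct and follows essentially the same route as the paper: extract weak limits from the uniform bounds of Theorem~\ref{s2-t3}, pass to the limit in the Galerkin identity against finite combinations of the basis, extend by density, and recover the initial data by two integrations by parts in time against test functions vanishing at $t=T$. The only noticeable difference is your invocation of Aubin--Lions for the boundary coupling term $a_2$; in fact this is unnecessary, since with the test function fixed the map $p\mapsto\int_{\partial D} p\,\boldsymbol n_D\cdot\mathbf v\,ds$ is a bounded linear functional on $H^1(\Omega)$ via the trace theorem, so weak convergence of $\mathrm p_N$ in $L^2(0,T;H^1(\Omega))$ already suffices, which is what the paper uses.
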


\begin{proof}
It follows from the energy estimate in Theorem \ref{s2-t3} that
\begin{align*}
&\{{\bf u}_{\overline{M}}\}_{M,N=1}^\infty ~\text{is bounded in}~ L^2(0, T;
{\mathcal{H}^{1}}),\\
&\{{\bf u}_{\overline{M}}^{'}\}_{M,N=1}^\infty ~\text{is bounded in}~ L^2(0, T;
{\mathcal{L}^{2}}),\\
&\{{\bf u}_{\overline{M}}^{''}\}_{M,N=1}^\infty ~\text{is bounded in}~ L^2(0, T;
{\mathcal{H}^{-1}}).
\end{align*}
Therefore, there exists a subsequence still denoted as $\{{\bf
u}_{\overline{M}}\}_{M,N=1}^\infty$ and ${\bf u}\in L^2(0, T;
{\mathcal{H}^{1}})$
with ${\bf u}^{'}\in L^2(0, T; {\mathcal{L}^{2}})$ and ${\bf u}^{''}\in L^2(0,
T; {\mathcal{H}^{-1}})$ such that
\begin{equation}\label{s2-t4-1}
 \begin{cases}
{\bf u}_{\overline{M}} \rightharpoonup{\bf u}\quad&\text{weakly in}~ L^2(0, T;
{\mathcal{H}^{1}}),\\
{\bf u}_{\overline{M}}^{'}  \rightharpoonup{\bf u}^{'}\quad&\text{weakly in}~
L^2(0, T; {\mathcal{L}^{2}}),\\
{\bf u}_{\overline{M}}^{''}  \rightharpoonup{\bf u}^{''}\quad&\text{weakly in}~
L^2(0, T; {\mathcal{H}^{-1}}),
 \end{cases}
\end{equation}
which imply
\[
 \begin{cases}
({\bf u}_M, {\rm p}_N )^{\top} \rightharpoonup ({\bf u}, {\rm p})^{\top},\\
({\bf u}_M^{'}, {\rm p}_N^{'})^{\top}  \rightharpoonup ({\bf u}^{'}, {{\rm
p}}^{'})^{\top},\\
({\bf u}_M^{''}, {\rm p}_N^{''})^{\top}  \rightharpoonup ({\bf u}^{''}, {{\rm
p}}^{''})^{\top}.
 \end{cases}
\]

Next we fix integers $N_{1}, N_{2}$ and choose functions ${\rm q}\in C^1([0,
T]; H_{0}^{1}(\Omega)\times L^{{2}}(\partial\Omega))$ and
${\bf v}\in C^1([0, T]; \boldsymbol{H}_{0}^{1}(D)\times
\boldsymbol{L}^{{2}}(\partial D))$ of the form
\begin{equation}\label{s2-t4-2}
{\rm q}(t)=\sum_{k=1}^{N_{1}} q_{N_{1} k}(t) w_k,\quad  {\bf
v}(t)=\sum_{j=1}^{N_{2}} v_{N_{2} j}(t) {W}_{j}
\end{equation}
where $q_{N_{1} k}, v_{N_{2} j}, k=1, \dots, N_{1}, j=1, \dots, N_{2}$ are
smooth functions. Letting $m\geq \max\{N_{1}, N_{2}\}$ where $m=\min\{M,N\}$,
we have from \eqref{vf6} that
\begin{align}\label{s2-t4-3}
\int_{0}^{T} & \Big(\frac{\beta^{2}}{c}\langle {{\rm p}_{m}^{''},{\rm
q}}\rangle+\rho_{1}\rho_{2}\langle {{\bf u}_{m}^{''},{\bf v}}\rangle
 +a_{0}[{\rm p}_{m},{\rm q};t] +\rho_{1}a_{1}[{\bf u}_{m},{\bf v};t]\notag\\
&\quad +\rho_{1}\int_{\partial D}({\rm p}_{m}n_{D}\cdot{\bf v}-(n_{D}\cdot{\bf
u}_{m}^{''}){\rm q}){\rm d}s\Big) {\rm d}t
=\int_{0}^{T}({\rm f},{\rm q}){\rm d}t.
\end{align}
Using \eqref{s2-t4-1} and taking the limits $m\to\infty$ in \eqref{s2-t4-3}
yields
\begin{align}\label{s2-t4-4}
\int_{0}^{T} & \Big(\frac{\beta^{2}}{c}\langle {{\rm p}^{''},{\rm
q}}\rangle+\rho_{1}\rho_{2}\langle {{\bf u}^{''},{\bf v}}\rangle
 +a_{0}[{\rm p},{\rm q};t]+\rho_{1}a_{1}[{\bf u},{\bf v};t]\notag\\
&\quad +\rho_{1}\int_{\partial D}({\rm p} n_{D}\cdot{\bf v}-(n_{D}\cdot{\bf
u}^{''}){\rm q}){\rm d}s\Big) {\rm d}t
=\int_{0}^{T}({\rm f},{\rm q}){\rm d}t,
\end{align}
which holds for any function ${\rm q}\in L^2([0, T];
\widetilde{H}_{0}^{1}(\Omega))$ and
${\bf v}\in L^2([0, T]; \boldsymbol{H}^{1}(D))$ since
functions of the form \eqref{s2-t4-2} are dense in the space.
Moreover, we have from \eqref{s2-t4-4} that for any ${\rm
\widetilde{q}}\in H_{0}^{1}(\Omega),
{\bf \widetilde{v}}\in \boldsymbol{H}^{1}(D)$ and $t\in[0,T]$
\begin{align*}
 \frac{\beta^{2}}{c} & \langle {{\rm p}^{''},{\rm
\widetilde{q}}}\rangle+\rho_{1}\rho_{2}\langle {{\bf u}^{''},{\bf
\widetilde{v}}}\rangle
 +a_{0}[{\rm p},{\rm \widetilde{q}};t]+\rho_{1}a_{1}[{\bf u},{\bf
\widetilde{v}};t]\\
&\quad  +\rho_{1}\int_{\partial D}({\rm p} n_{D}\cdot{\bf
\widetilde{v}}-(n_{D}\cdot{\bf u}^{''}){\rm \widetilde{q}}){\rm d}s
=({\rm f}, {\rm \widetilde{q}})
\end{align*}
and
\begin{align*}
& {\rm p}\in C(0,T; L^{2}(\Omega)),\quad {\rm p}^{'}\in C(0,T;
H^{-1}(\Omega)),\\
& {\bf u}\in C(0,T; \boldsymbol{L}^{2}(D)),\quad{\bf u}^{'}\in C(0,T;
\boldsymbol{H}^{-1}(D)).
\end{align*}

Next is to verify
\begin{equation}\label{s2-t4-5}
{\rm p}|_{t=0}=g, \quad {\rm p}'|_{t=0}=h.
\end{equation}
Choose any function ${\rm q}\in C^2([0, T]; \widetilde{H}_{0}^{1}(\Omega))$ with
${\rm q}(T)={\rm q}^{'}(T)=0$
and ${\bf v}\in C^2([0, T]; \boldsymbol{H}^{1}(D))$ with ${\bf v}(T)={\bf
v}^{'}(T)=0={\bf v}(0)={\bf v}^{'}(0)$.
Using the integration by parts twice with respect to $t$ in \eqref{s2-t4-4}
gives
\begin{align}\label{s2-t4-6}
\int_{0}^{T} &\Big(\frac{\beta^{2}}{c}( {{\rm q}^{''},{\rm
p}})+\rho_{1}\rho_{2}( {{\bf v}^{''},{\bf u}})
+a_{0}[{\rm p},{\rm q};t]+\rho_{1}a_{1}[{\bf u},{\bf v};t]\notag\\
&\quad +\rho_{1}\int_{\partial D}({\rm p} n_{D}\cdot{\bf v}-(n_{D}\cdot{\bf
u}){\rm q}^{''}){\rm d}s\Big) {\rm d}t
=\int_{0}^{T}({\rm f},{\rm q}){\rm d}t-({\rm p}(0),{\rm q}^{'}(0))+\langle
{{\rm p}^{'}(0),{\rm q}(0)}\rangle.
\end{align}
Similarly, we have from \eqref{s2-t4-3} that
\begin{align}\label{s2-t4-7}
\int_{0}^{T} & \Big(\frac{\beta^{2}}{c}( {{\rm q}^{''},{\rm
p}_{m}})+\rho_{1}\rho_{2}( {{\bf v}^{''},{\bf u}_{m}})
+a_{0}[{\rm p}_{m},{\rm q};t]+\rho_{1}a_{1}[{\bf u}_{m},{\bf v};t]\notag\\
&\qquad +\rho_{1}\int_{\partial D}({\rm p}_{m} n_{D}\cdot{\bf v}-(n_{D}\cdot{\bf
u}_{m}){\rm q}^{''}){\rm d}s \Big) {\rm d}t\notag\\
&\quad =\int_{0}^{T}({\rm f},{\rm q}){\rm d}t-({\rm p}_{m}(0),{\rm
q}^{'}(0))+\langle
{{\rm p}_{m}^{'}(0),{\rm q}(0)}\rangle.
\end{align}
Taking the limits $m\to\infty$ in \eqref{s2-t4-7}, using \eqref{puic} and
\eqref{s2-t4-1}, we get
\begin{align}\label{s2-t4-8}
\int_{0}^{T}&\Big(\frac{\beta^{2}}{c}( {{\rm q}^{''},{\rm
p}})+\rho_{1}\rho_{2}( {{\bf v}^{''},{\bf u}})
+a_{0}[{\rm p},{\rm q};t]+\rho_{1}a_{1}[{\bf u},{\bf v};t]\notag\\
&\qquad+\rho_{1}\int_{\partial D}({\rm p} n_{D}\cdot{\bf v}-(n_{D}\cdot{\bf
u}){\rm q}^{''}){\rm d}s\Big) {\rm d}t\notag\\
&=\int_{0}^{T}({\rm f},{\rm q}){\rm d}t-(g,{\rm q}^{'}(0))+\langle {h,{\rm
q}(0)}\rangle.
\end{align}
Comparing \eqref{s2-t4-6} and \eqref{s2-t4-8}, we conclude \eqref{s2-t4-5}
since ${\rm q}(0)$ and ${\rm q}^{'}(0)$ are arbitrary. Hence (${\rm p}, {\bf
u})$ is a weak solution of the initial boundary value problem \eqref{ibvp}.
\end{proof}

Taking the partial derivatives of \eqref{eq2}, \eqref{iv2},
and the second term of \eqref{bv2} with respect to $t$, we consider
\begin{subequations} \label{ibvp2}
\begin{numcases}
{ }
\frac{\beta}{c^2}\partial_{t}^{2} p-\nabla\cdot(M\nabla p)=f \label{eq1n}
&\text{in} ~ $\Omega\times(0, T]$,  \\
p=0 \label{bv1n} &\text{on}~ $\partial B_b\times (0, T]$,\\
p|_{t=0}=g,\quad\partial_t p|_{t=0}=h \label{iv1n} &in $\Omega$\\
\mu\Delta(\partial_{t}\boldsymbol{u})+(\lambda+\mu)\nabla\nabla\cdot(\partial_{t
}\boldsymbol{u}) -\rho_{2}\partial_t^{2}(\partial_{t}\boldsymbol{u})=0
\label{eq2n} &\text{in} ~ $D\times (0, T]$, \\
(\partial_{t}\boldsymbol{u})|_{t=0}=0,\quad
\partial_{t}^{2} \boldsymbol{u}|_{t=0}=\rho_{2}^{-1}(\mu\Delta\boldsymbol{u}
+(\lambda+\mu)\nabla\nabla\cdot\boldsymbol{u})|_{t=0}=0 \label{iv2n} &\text{in}
~ $D$,\\
\partial_{\boldsymbol{n}_{D}} p=-\rho_{1}\boldsymbol{n}_{D}\cdot\partial_t^{2}
\boldsymbol{u} \label{bv21n}&\text{in} ~  $\partial D\times (0, T]$,\\
-(\partial_{t}p)\boldsymbol{n}_{D}=\mu \partial_{\boldsymbol{n}_{D}}
(\partial_{t}\boldsymbol{u})
+(\lambda+\mu)(\nabla\cdot(\partial_{t}\boldsymbol{u }))\boldsymbol{n}_{D}
\label{bv2n}&\text{in} ~  $\partial D\times (0, T]$.
\end{numcases}
\end{subequations}

\begin{theorem}
 The initial boundary value problem \eqref{ibvp2} has a unique weak solution.
\end{theorem}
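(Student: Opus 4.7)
The plan is to reduce \eqref{ibvp2} to a problem of the same abstract type as \eqref{ibvp}, for which existence and uniqueness have just been established, and then apply the Galerkin/energy machinery of Theorems above essentially verbatim. Introduce the auxiliary unknown $\boldsymbol{w}:=\partial_{t}\boldsymbol{u}$. Then \eqref{eq2n}--\eqref{iv2n} become a homogeneous Navier equation for $\boldsymbol{w}$ with the quiescent initial conditions $\boldsymbol{w}|_{t=0}=0$, $\partial_t\boldsymbol{w}|_{t=0}=0$ (the second identity uses \eqref{eq2} at $t=0$ together with $\boldsymbol{u}|_{t=0}=\partial_t\boldsymbol{u}|_{t=0}=0$). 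The interface conditions \eqref{bv21n} and \eqref{bv2n} read
\[
\partial_{\boldsymbol{n}_{D}}p=-\rho_{1}\boldsymbol{n}_{D}\cdot\partial_{t}\boldsymbol{w},\qquad
-(\partial_{t}p)\boldsymbol{n}_{D}=\mu\partial_{\boldsymbol{n}_{D}}\boldsymbol{w}+(\lambda+\mu)(\nabla\cdot\boldsymbol{w})\boldsymbol{n}_{D},
\]
so the pair $(p,\boldsymbol{w})$ solves a coupled acoustic--elastic system of exactly the same structural form as \eqref{ibvp}, the only difference being that the second coupling identity involves $\partial_{t}p$ in place of $p$. This is the kind of ``velocity--velocity'' coupling for which the standard energy method works even more cleanly than in the ``pressure--displacement'' formulation.

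First I would define a weak solution of \eqref{ibvp2} in the obvious analogue of Definition~1, with $\boldsymbol{u}$ replaced by $\boldsymbol{w}$ and the modified coupling form
\[
\widetilde{a}_{2}[\partial_{t}{\rm p},{\bf v};t]=\int_{\partial D}(\partial_{t}{\rm p})\boldsymbol{n}_{D}\cdot{\bf v}\,{\rm d}s,\qquad
\widetilde{a}_{3}[{\bf w},{\rm q};t]=-\int_{\partial D}(\boldsymbol{n}_{D}\cdot\partial_{t}{\bf w}){\rm q}\,{\rm d}s.
\]
I would then run the Galerkin scheme of Section 3.1 word for word on the bases $\{w_k\}$ and $\{W_k\}$, obtaining finite dimensional approximations $({\rm p}_{N},{\bf w}_{M})$ satisfying a linear second order ODE system analogous to \eqref{ode}. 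Solvability on $[0,T]$ is immediate, exactly as in Theorem~3.2, because the mass-like matrix $A$ remains lower triangular and invertible.

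The main step is the a priori estimate. I would test the acoustic equation with $\partial_{t}{\rm p}_{N}$ and the elastic equation with $\partial_{t}{\bf w}_{M}=\partial_{t}^{2}{\bf u}_{M}$, sum, and integrate by parts in $x$. The two interface integrals produce
\[
\int_{\partial D}\rho_{1}(\partial_{t}{\rm p}_{N})(\boldsymbol{n}_{D}\cdot\partial_{t}{\bf w}_{M})\,{\rm d}s-\int_{\partial D}\rho_{1}(\partial_{t}{\rm p}_{N})(\boldsymbol{n}_{D}\cdot\partial_{t}{\bf w}_{M})\,{\rm d}s=0,
\]
so the boundary coupling cancels and one is left with a clean energy identity of the form
\[
\tfrac{1}{2}\tfrac{\rm d}{{\rm d}t}\!\left(\|c^{-1}\beta^{1/2}\partial_{t}{\rm p}_{N}\|^{2}_{L^{2}(\Omega)}+\|M^{1/2}\nabla{\rm p}_{N}\|^{2}_{L^{2}(\Omega)}+\rho_{2}\|\partial_{t}{\bf w}_{M}\|^{2}_{\boldsymbol{L}^{2}(D)}+a_{1}[{\bf w}_{M},{\bf w}_{M};t]\right)=(f,\partial_{t}{\rm p}_{N})_{\Omega}.
\]
From here Cauchy--Schwarz, the coercivity of $a_{0}$ and $a_{1}$ stated in the proof of Theorem~3.2, and Gr\"onwall's inequality deliver a bound of exactly the shape of Theorem~\ref{s2-t3}, and the $\mathcal{H}^{-1}$-estimate for $\partial_{t}^{2}{\rm p}$, $\partial_{t}^{2}{\bf w}$ is obtained by duality as in \eqref{dual}--\eqref{uv}. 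The hard part I anticipate is precisely verifying the cancellation of the interface integrals in the velocity-tested identity and making sure that the integration-by-parts in $x$ for the elastic term is justified at the Galerkin level; once that is done the rest of the energy estimate is routine.

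Having established uniform bounds, I would extract a weakly convergent subsequence and pass to the limit in the Galerkin identity to obtain a weak solution, exactly as in the proof of the previous theorem, and verify the initial conditions by the same test-function argument based on two integrations by parts in $t$. For uniqueness, the system is linear, so if $({\rm p},{\bf w})$ and $(\tilde{\rm p},\tilde{\bf w})$ are two weak solutions with the same data, their difference solves the homogeneous problem with $f=0$, $g=0$, $h=0$ and zero initial data for $\bf w$, and the energy identity above forces the difference to vanish identically on $[0,T]$. Finally, setting $\boldsymbol{u}(x,t):=\int_{0}^{t}\boldsymbol{w}(x,s)\,{\rm d}s$ recovers the original elastic displacement, completing the proof.
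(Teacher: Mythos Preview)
Your proposal is correct and follows essentially the same approach as the paper. The core of both arguments is the substitution $\boldsymbol{w}=\partial_t\boldsymbol{u}$ and the energy identity obtained by testing the acoustic equation with $\partial_t p$ and the (time-differentiated) elastic equation with $\partial_t\boldsymbol{w}=\partial_t^2\boldsymbol{u}$, which makes the two interface integrals cancel exactly; this is precisely the computation the paper carries out in its proof, with the energy
\[
E(t)=\Big\|\tfrac{\sqrt{\beta}}{c}\partial_t p\Big\|_{L^2(\Omega)}^2+\|M^{1/2}\nabla p\|_{L^2(\Omega)}^2
+\rho_1\rho_2\|\partial_t^2\boldsymbol{u}\|_{\boldsymbol{L}^2(D)}^2
+\rho_1(\lambda+\mu)\|\nabla\cdot\partial_t\boldsymbol{u}\|_{L^2(D)}^2
+\rho_1\mu\|\nabla\partial_t\boldsymbol{u}\|_{L^2(D)^{d\times d}}^2.
\]
The only cosmetic difference is that the paper, having already obtained existence for \eqref{ibvp} via Galerkin in the preceding theorems, writes out only the uniqueness part (showing $E(t)\equiv 0$ when $f=g=h=0$), whereas you re-run the full Galerkin scheme for the $(p,\boldsymbol{w})$ system and then deduce uniqueness from the same energy identity; both routes are valid and lead to the same conclusion.
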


\begin{proof}
It suffices to show that $p=0, \boldsymbol u=0$ if $f=g=h=0$.
Fix $0\leq t\leq T$ and let
\[
E(t):=E_{1}(t)+E_{2}(t),
\]
where
\begin{align*}
&E_{1}(t)=\|\frac{\sqrt{\beta}}{c}\partial_{t}
p\|^{2}_{L^{2}(\Omega)}+\|M^{\frac{1}{2}}\nabla
p\|^{2}_{L^{2}(\Omega)},\nonumber\\
&E_{2}(t)=\|\sqrt{\rho_{1}\rho_{2}}\ \partial_{t}^{2}
\boldsymbol{u}\|^{2}_{\boldsymbol{L}^{2}(D)}
+\|\sqrt{\rho_{1}(\lambda+\mu)}\ \nabla\cdot(\partial_{t}
\boldsymbol{u})\|^{2}_{L^{2}(D)}
+\|\sqrt{\rho_{1}\mu}\ \nabla(\partial_{t}
\boldsymbol{u})\|^{2}_{L^{2}(D)^{d\times d}}.
\end{align*}
Then for each $t\in[0, T]$, we have
\begin{equation}\label{s2-t5-1}
 E(t)-E(0)=\int_0^t E^{'}(\tau){\rm d}\tau=\int_0^t E_{1}^{'}(\tau){\rm
d}\tau+\int_0^t E_{2}^{'}(\tau){\rm d}\tau.
\end{equation}
Following from \eqref{ibvp2} and the integration by parts, we obtain
\begin{align}\label{s2-t5-2}
\int_{0}^{t} E_{1}^{'}(\tau){\rm d}\tau
&=2\int_{0}^{t}\int_{\Omega}\left(\frac{\beta}{c^{2}}(\partial_{\tau}^{2}
p)(\partial_{\tau} p)
+(M^{\frac{1}{2}}\nabla (\partial_{\tau}p))\cdot (M^{\frac{1}{2}}\nabla
p)\right) {\rm d}x {\rm d}\tau\nonumber\\
&=2\int_{0}^{t}\int_{\Omega}\left((\partial_{\tau}p)(\nabla\cdot(M\nabla p))
+(M^{\frac{1}{2}}\nabla (\partial_{\tau}p))\cdot (M^{\frac{1}{2}}\nabla p)
+(\partial_{\tau}p)f\right) {\rm d}x {\rm d}\tau\nonumber\\
&=2\int_{0}^{t}\int_{\Omega}\left(-(M^{\frac{1}{2}}\nabla
(\partial_{\tau}p))\cdot (M^{\frac{1}{2}}\nabla p)
+(M^{\frac{1}{2}}\nabla (\partial_{\tau}p))\cdot (M^{\frac{1}{2}}\nabla p)
+(\partial_{\tau}p)f\right) {\rm d}x {\rm d}\tau\nonumber\\
&\quad+2\int_{0}^{t}\int_{\partial B_{b}}0 {\rm d}s{\rm d}\tau
-2\int_{0}^{t}\int_{\partial D}(\partial_{\boldsymbol{n}_{D}}p)
(\partial_{\tau}p) {\rm d}s {\rm d}\tau\nonumber\\
&=2\int_{0}^{t}\rho_{1}\int_{\partial
D}(\boldsymbol{n}_{D}\cdot\partial_{\tau}^{2}\boldsymbol{u})(\partial_{\tau}p)
{\rm d}s {\rm d}\tau
\end{align}
and
\begin{align}\label{s2-t5-3}
\int_{0}^{t} E_{2}^{'}(\tau){\rm d}\tau
&=2\int_{0}^{t}\int_{D}\left(\rho_{1}\rho_{2}(\partial_{\tau}^{3}
\boldsymbol{u})\cdot(\partial_{\tau}^{2} \boldsymbol{u})
+\rho_{1}(\lambda+\mu)[\nabla\cdot(\partial_{\tau}^{2} \boldsymbol{u})]
[\nabla\cdot(\partial_{\tau}\boldsymbol{u})]\right) {\rm d}x {\rm
d}\tau\nonumber\\
&\quad+2\int_{0}^{t}\int_{D}\left(\rho_{1}\mu[\nabla(\partial_{\tau}^{2}
\boldsymbol{u})]:[\nabla(\partial_{\tau} \boldsymbol{u})]\right) {\rm d}x {\rm
d}\tau\nonumber\\
&=2\int_{0}^{t}\int_{D}\left(\rho_{1}\mu(\Delta(\partial_{\tau}\boldsymbol{u}
))\cdot(\partial_{\tau}^{2} \boldsymbol{u})
+\rho_{1}(\lambda+\mu)(\nabla\nabla(\partial_{\tau}\boldsymbol{u}
))\cdot(\partial_{\tau}^{2} \boldsymbol{u})\right) {\rm d}x {\rm
d}\tau\nonumber\\
&\quad+2\int_{0}^{t}\int_{D}\left(\rho_{1}(\lambda+\mu)[\nabla\cdot(\partial_{
\tau}^{2} \boldsymbol{u})]
[\nabla\cdot(\partial_{\tau}\boldsymbol{u})]
+\rho_{1}\mu[\nabla(\partial_{\tau}^{2} \boldsymbol{u})]:[\nabla(\partial_{\tau}
\boldsymbol{u})]\right) {\rm d}x {\rm d}\tau\nonumber\\
&=2\int_{0}^{t}\int_{D}\left(-\rho_{1}\mu[\nabla(\partial_{\tau}^{2}
\boldsymbol{u})]:[\nabla(\partial_{\tau} \boldsymbol{u})]
-\rho_{1}(\lambda+\mu)[\nabla\cdot(\partial_{\tau}^{2} \boldsymbol{u})]
[\nabla\cdot(\partial_{\tau}\boldsymbol{u})]\right) {\rm d}x {\rm
d}\tau\nonumber\\
&\quad+2\int_{0}^{t}\int_{D}\left(\rho_{1}(\lambda+\mu)[\nabla\cdot(\partial_{
\tau}^{2} \boldsymbol{u})]
[\nabla\cdot(\partial_{\tau}\boldsymbol{u})]
+\rho_{1}\mu[\nabla(\partial_{\tau}^{2} \boldsymbol{u})]:[\nabla(\partial_{\tau}
\boldsymbol{u})]\right) {\rm d}x {\rm d}\tau\nonumber\\
&\quad+2\int_{0}^{t}\int_{\partial
D}\rho_{1}\left(\mu\partial_{\boldsymbol{n}_{D}}(\partial_{\tau}\boldsymbol{u}
)\cdot(\partial_{\tau}^{2}\boldsymbol{u})
+(\lambda+\mu)(\nabla\cdot(\partial_{\tau}\boldsymbol{u})\boldsymbol{n}_{D}
)\cdot(\partial_{\tau}^{2}\boldsymbol{u})\right){\rm d}x {\rm d}\tau\nonumber\\
&=-2\int_{0}^{t}\int_{\partial
D}\rho_{1}(\partial_{\tau}p)(\boldsymbol{n}_{D}\cdot\partial_{\tau}^{2}
\boldsymbol{u}) {\rm d}s {\rm d}\tau.
\end{align}
It is easy to note that if $f=g=h=0$, we have
\[
E(0)=0.
\]
Thus, combining \eqref{s2-t5-1}--\eqref{s2-t5-3}, we obtain
\[
 E(t)= E_{1}(t)+E_{2}(t)=0,
\]
which implies that
\[
 \partial_{t}p=\nabla p=
\partial_{t}^{2}\boldsymbol{u}=\nabla\cdot(\partial_{t}\boldsymbol{u}
)=\nabla(\partial_{t}\boldsymbol{u})=0.
\]
Thus we obtain from initial conditions in \eqref{ibvp2} that $p=0,
\boldsymbol u=0$ if $f=g=h=0$, which completes the proof.
\end{proof}

\subsection{Stability}

In this section we discuss the stability estimate for the unique weak solution
of the initial boundary value problem \eqref{ibvp2}.

\begin{theorem}\label{s3-t1}
Let $(p, \boldsymbol u)$  be the unique weak solution of the initial boundary
value problem \eqref{ibvp2}.
Given $f\in L^2(\Omega), g\in \widetilde{H}_0^1(\Omega), h\in L^2(\Omega)$,
there exists a positive constant $C$ such that
\begin{align*}
&\max \limits_{t \in [0, T]} \Big\{\|\partial_t  p(\cdot, t)
\|^{2}_{L^2(\Omega)} +\|\nabla p(\cdot, t)
\|^{2}_{\boldsymbol{L}^{2}(\Omega)}\notag\\
&\quad\quad+\|\partial_{t}^{2}  \boldsymbol{u} (\cdot, t)
\|^{2}_{\boldsymbol{L}^{2}(D)} +\|\nabla\cdot(\partial_t  \boldsymbol{u} (\cdot,
t)) \|^{2}_{L^2(D)} +\|\nabla(\partial_t  \boldsymbol{u} (\cdot, t))
\|^{2}_{L^2(D)^{d\times d}}\Big\} \notag\\
&\leq C \left( \|f\|^{2}_{L^{1}(0, T; L^2(\Omega))}
+\|g\|^{2}_{H^{1}(\Omega)}
+\|h\|^{2}_{L^{2}(\Omega)}\right).
\end{align*}
\end{theorem}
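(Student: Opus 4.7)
The plan is to extend the energy identity derived in the uniqueness proof (Theorem 3.3) to the inhomogeneous case, then absorb the forcing contribution via Young's inequality. Define the same energy
\[
E(t) = E_1(t) + E_2(t)
\]
as in the previous proof. The coefficient $\beta$ is a positive continuous function bounded below and the matrix $M$ is symmetric positive definite with continuous entries on $\overline{\Omega}$, so $E(t)$ is uniformly equivalent to
\[
\|\partial_t p(\cdot,t)\|_{L^2(\Omega)}^2 + \|\nabla p(\cdot,t)\|_{L^2(\Omega)}^2
+ \|\partial_t^2\boldsymbol{u}(\cdot,t)\|_{\boldsymbol{L}^2(D)}^2
+ \|\nabla\cdot(\partial_t\boldsymbol{u}(\cdot,t))\|_{L^2(D)}^2
+ \|\nabla(\partial_t\boldsymbol{u}(\cdot,t))\|_{L^2(D)^{d\times d}}^2,
\]
which is precisely the left-hand side of the theorem. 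So it suffices to bound $\max_{t\in[0,T]} E(t)$.

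First I would redo the integration-by-parts computation from (\ref{s2-t5-2})--(\ref{s2-t5-3}) keeping $f$ on the right-hand side. The boundary terms on $\partial D$ coming from the interface conditions (\ref{bv21n}) and (\ref{bv2n}) cancel exactly as before, while the Dirichlet condition (\ref{bv1n}) kills the boundary term on $\partial B_b$. What survives is the identity
\[
E(t) = E(0) + 2\int_0^t \int_\Omega (\partial_\tau p)\, f \,\mathrm{d}x\,\mathrm{d}\tau.
\]
Evaluating at $t=0$ and using the initial conditions (\ref{iv1n}), (\ref{iv2n}) — note $\partial_t\boldsymbol{u}|_{t=0}=0$ and $\partial_t^2\boldsymbol{u}|_{t=0}=0$, so $E_2(0)=0$ — together with the boundedness of $\beta$ and of the entries of $M$, one gets
\[
E(0) \leq C\bigl(\|h\|_{L^2(\Omega)}^2 + \|g\|_{H^1(\Omega)}^2\bigr).
\]

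Next, for any $t^\star\in[0,T]$, applying the Cauchy--Schwarz inequality in $x$ for fixed $\tau$ gives
\[
\Bigl|2\int_0^{t^\star}\!\!\int_\Omega (\partial_\tau p)\, f\,\mathrm{d}x\,\mathrm{d}\tau\Bigr|
\leq 2\Bigl(\sup_{\tau\in[0,t^\star]}\|\partial_\tau p(\cdot,\tau)\|_{L^2(\Omega)}\Bigr)
\|f\|_{L^1(0,T;L^2(\Omega))}.
\]
Since $\|\partial_\tau p(\cdot,\tau)\|_{L^2(\Omega)}^2 \leq C_0 E(\tau)$, setting
$\mathcal{E}(t^\star) := \max_{\tau\in[0,t^\star]} E(\tau)$, Young's inequality $2ab \leq \tfrac{1}{2}a^2 + 2b^2$ yields
\[
\mathcal{E}(t^\star)\leq E(0) + \tfrac{1}{2}\mathcal{E}(t^\star) + C\|f\|_{L^1(0,T;L^2(\Omega))}^2,
\]
which absorbs the $\mathcal{E}(t^\star)$ term and gives the desired bound uniformly in $t^\star\in[0,T]$.

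The main technical subtlety is not the algebra but ensuring that the boundary integrals on $\partial D$ cancel in the inhomogeneous setting as they did in Theorem 3.3; this relies critically on the fact that the forcing appears only in the acoustic equation and that the interface conditions (\ref{bv21n})--(\ref{bv2n}) involve $\partial_t p$ and $\partial_t^2\boldsymbol{u}$, so that the test functions $\partial_\tau p$ and $\partial_\tau^2\boldsymbol{u}$ are precisely the right ones to produce matching traces with opposite signs. A secondary point to verify is that the lifting $v_0$ used to reduce (\ref{ibvpo}) to (\ref{ibvp}) is already absorbed into the data $(f,g,h)$, so no extra boundary data on $\partial B_b$ contribute. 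Once the cancellation is established, the rest is a standard Grönwall-free energy argument via Young's inequality.
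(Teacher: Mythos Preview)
Your proposal is correct and follows essentially the same approach as the paper: define the weighted energy $E(t)=E_1(t)+E_2(t)$, repeat the integration-by-parts computation from the uniqueness proof so that the $\partial D$ boundary terms cancel via the interface conditions and only the forcing term $2\int_0^t\int_\Omega(\partial_\tau p)f\,\mathrm{d}x\,\mathrm{d}\tau$ survives, bound $E(0)$ by the data, and then absorb the remaining term with Young's inequality. The paper carries out exactly these steps in the same order.
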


\begin{proof}
It follows from the discussion in previous section that the initial boundary
value problem \eqref{ibvp} has a unique weak solution $({p}, \boldsymbol{u})$
satisfying
\begin{align*}
p &\in L^2(0, T; {\widetilde{H}_{0}^{1}}(\Omega))\cap H^{1} (0, T;
L^2(\Omega)),\\
\boldsymbol{u} &\in L^2(0, T; \boldsymbol{H}^{1}(D))\cap H^{1} (0, T;
\boldsymbol{L}^2(D)).
\end{align*}
For any $t\in[0, T]$, consider the energy function
\[
E(t):=E_{1}(t)+E_{2}(t),
\]
where
\begin{align*}
&E_{1}(t)=\|\frac{\sqrt{\beta}}{c}\partial_{t}
p\|^{2}_{L^{2}(\Omega)}+\|M^{\frac{1}{2}}\nabla
p\|^{2}_{L^{2}(\Omega)},\nonumber\\
&E_{2}(t)=\|\sqrt{\rho_{1}\rho_{2}}\ \partial_{t}^{2}
\boldsymbol{u}\|^{2}_{\boldsymbol{L}^{2}(D)}
+\|\sqrt{\rho_{1}(\lambda+\mu)}\ \nabla\cdot(\partial_{t}
\boldsymbol{u})\|^{2}_{L^{2}(D)}
+\|\sqrt{\rho_{1}\mu}\ \nabla(\partial_{t}
\boldsymbol{u})\|^{2}_{L^{2}(D)^{d\times d}}.
\end{align*}
Then for each $t\in[0, T]$, we have
\begin{equation}\label{s3-t1-1}
 E(t)-E(0)=\int_0^t E^{'}(\tau){\rm d}\tau=\int_0^t E_{1}^{'}(\tau){\rm
d}\tau+\int_0^t E_{2}^{'}(\tau){\rm d}\tau.
\end{equation}
By \eqref{ibvp2} and the integration by parts, we obtain
\begin{align}\label{s3-t1-2}
\int_{0}^{t} E_{1}^{'}(\tau){\rm d}\tau
&=2\int_{0}^{t}\int_{\Omega}\left(\frac{\beta}{c^{2}}(\partial_{\tau}^{2}
p)(\partial_{\tau} p)
+(M^{\frac{1}{2}}\nabla (\partial_{\tau}p))\cdot (M^{\frac{1}{2}}\nabla
p)\right) {\rm d}x {\rm d}\tau\nonumber\\
&=2\int_{0}^{t}\int_{\Omega}\left((\partial_{\tau}p)(\nabla\cdot(M\nabla p))
+(M^{\frac{1}{2}}\nabla (\partial_{\tau}p))\cdot (M^{\frac{1}{2}}\nabla p)
+(\partial_{\tau}p)f\right) {\rm d}x {\rm d}\tau\nonumber\\
&=2\int_{0}^{t}\int_{\Omega}\left(-(M^{\frac{1}{2}}\nabla
(\partial_{\tau}p))\cdot (M^{\frac{1}{2}}\nabla p)
+(M^{\frac{1}{2}}\nabla (\partial_{\tau}p))\cdot (M^{\frac{1}{2}}\nabla p)
+(\partial_{\tau}p)f\right) {\rm d}x {\rm d}\tau\nonumber\\
&\quad+2\int_{0}^{t}\int_{\partial B_{b}}0 {\rm d}s {\rm d}\tau
-2\int_{0}^{t}\int_{\partial D}(\partial_{\boldsymbol{n}_{D}}p)
(\partial_{\tau}p) {\rm d}s {\rm d}\tau\nonumber\\
&=2\int_{0}^{t}\rho_{1}\int_{\partial
D}(\boldsymbol{n}_{D}\cdot\partial^2_{\tau}\boldsymbol{u})(\partial_{\tau}p)
{\rm d}s {\rm d}\tau
+2\int_{0}^{t}\int_{\Omega}(\partial_{\tau}p)f {\rm d}x {\rm d}\tau,
\end{align}
and
\begin{align}\label{s3-t1-3}
\int_{0}^{t} E_{2}^{'}(\tau){\rm d}\tau
&=2\int_{0}^{t}\int_{D}\left(\rho_{1}\rho_{2}(\partial_{\tau}^{3}
\boldsymbol{u})\cdot(\partial_{\tau}^{2} \boldsymbol{u})
+\rho_{1}(\lambda+\mu)[\nabla\cdot(\partial_{\tau}^{2} \boldsymbol{u})]
[\nabla\cdot(\partial_{\tau}\boldsymbol{u})]\right) {\rm d}x {\rm
d}\tau\nonumber\\
&\quad+2\int_{0}^{t}\int_{D}\left(\rho_{1}\mu[\nabla(\partial_{\tau}^{2}
\boldsymbol{u})]:[\nabla(\partial_{\tau} \boldsymbol{u})]\right) {\rm d}x {\rm
d}\tau\nonumber\\
&=2\int_{0}^{t}\int_{D}\left(\rho_{1}\mu(\Delta(\partial_{\tau}\boldsymbol{u}
))\cdot(\partial_{\tau}^{2} \boldsymbol{u})
+\rho_{1}(\lambda+\mu)(\nabla\nabla(\partial_{\tau}\boldsymbol{u}
))\cdot(\partial_{\tau}^{2} \boldsymbol{u})\right) {\rm d}x {\rm
d}\tau\nonumber\\
&\quad+2\int_{0}^{t}\int_{D}\left(\rho_{1}(\lambda+\mu)[\nabla\cdot(\partial_{
\tau}^{2} \boldsymbol{u})]
[\nabla\cdot(\partial_{\tau}\boldsymbol{u})]
+\rho_{1}\mu[\nabla(\partial_{\tau}^{2} \boldsymbol{u})]:[\nabla(\partial_{\tau}
\boldsymbol{u})]\right) {\rm d}x {\rm d}\tau\nonumber\\
&=2\int_{0}^{t}\int_{D}\left(-\rho_{1}\mu[\nabla(\partial_{\tau}^{2}
\boldsymbol{u})]:[\nabla(\partial_{\tau} \boldsymbol{u})]
-\rho_{1}(\lambda+\mu)[\nabla\cdot(\partial_{\tau}^{2} \boldsymbol{u})]
[\nabla\cdot(\partial_{\tau}\boldsymbol{u})]\right) {\rm d}x {\rm
d}\tau\nonumber\\
&\quad+2\int_{0}^{t}\int_{D}\left(\rho_{1}(\lambda+\mu)[\nabla\cdot(\partial_{
\tau}^{2} \boldsymbol{u})]
[\nabla\cdot(\partial_{\tau}\boldsymbol{u})]
+\rho_{1}\mu[\nabla(\partial_{\tau}^{2} \boldsymbol{u})]:[\nabla(\partial_{\tau}
\boldsymbol{u})]\right) {\rm d}x {\rm d}\tau\nonumber\\
&\quad+2\int_{0}^{t}\int_{\partial
D}\rho_{1}\left(\mu\partial_{\boldsymbol{n}_{D}}(\partial_{\tau}\boldsymbol{u}
)\cdot\partial_{\tau}^{2}\boldsymbol{u}
+(\lambda+\mu)(\nabla\cdot(\partial_{\tau}\boldsymbol{u})\boldsymbol{n}_{D}
)\cdot(\partial_{\tau}^{2}\boldsymbol{u})\right){\rm d}x {\rm d}\tau\nonumber\\
&=-2\int_{0}^{t}\int_{\partial
D}\rho_{1}(\partial_{\tau}p)(\boldsymbol{n}_{D}\cdot\partial_{\tau}^{2}
\boldsymbol{u}) {\rm d}s {\rm d}\tau.
\end{align}

It is easy to note that
\begin{align*}
E(0)&=\|\frac{\sqrt{\beta}}{c}\partial_{t}
p|_{t=0}\|^{2}_{L^{2}(\Omega)}+\|M^{\frac{1}{2}}\nabla
p|_{t=0}\|^{2}_{\boldsymbol{L}^{2}(\Omega)}\nonumber\\
&=\|\frac{\sqrt{\beta}}{c}h\|^{2}_{L^{2}(\Omega)}+\|M^{\frac{1}{2}}\nabla
g\|^{2}_{\boldsymbol{L}^{2}(\Omega)}.
\end{align*}
Combining \eqref{s3-t1-1}--\eqref{s3-t1-3} leads to
\begin{align*}
&\|\frac{\sqrt{\beta}}{c}\partial_{t}
p\|^{2}_{L^{2}(\Omega)}+\|M^{\frac{1}{2}}\nabla
p\|^{2}_{\boldsymbol{L}^{2}(\Omega)},\nonumber\\
&\quad +\|\sqrt{\rho_{1}\rho_{2}}\ \partial_{t}^{2}
\boldsymbol{u}\|^{2}_{\boldsymbol{L}^{2}(D)}
+\|\sqrt{\rho_{1}(\lambda+\mu)}\ \nabla\cdot(\partial_{t}
\boldsymbol{u})\|^{2}_{L^{2}(D)}
+\|\sqrt{\rho_{1}\mu}\ \nabla(\partial_{t} \boldsymbol{u})\|^{2}_{L^{2}(D)^{d
\times d}}\nonumber\\
&=2\int_{0}^{t}\int_{\Omega}(\partial_{\tau}p)f {\rm d}x {\rm d}\tau
 +\|\frac{\sqrt{\beta}}{c}h\|^{2}_{L^{2}(\Omega)}+\|M^{\frac{1}{2}}\nabla
g\|^{2}_{\boldsymbol{L}^{2}(\Omega)}\nonumber\\
 &\leq 2 \max\limits_{t \in [0, T]}\{ \| \partial_t p(\cdot,
t)\|_{L^2(\Omega)}\}\|f\|_{L^1 (0, T; L^2(\Omega))}
+\frac{\beta}{c^{2}}\|h\|^{2}_{L^{2}(\Omega)}+\|M^{\frac{1}{2}}\nabla
g\|^{2}_{\boldsymbol{L}^{2}(\Omega)}.
\end{align*}

Using the Young inequality, we obtain
\begin{align*}
&\max \limits_{t \in [0, T]} \{\|\partial_t  p(\cdot, t) \|^{2}_{L^2(\Omega)}
+\|\nabla p(\cdot, t) \|^{2}_{\boldsymbol{L}^{2}(\Omega)}\notag\\
&\quad\quad+\|\partial_{t}^{2}  \boldsymbol{u} (\cdot, t)
\|^{2}_{\boldsymbol{L}^{2}(D)}
+\|\nabla\cdot(\partial_t  \boldsymbol{u} (\cdot, t)) \|^{2}_{{L}^{2}(D)}
+\|\nabla(\partial_t  \boldsymbol{u} (\cdot, t)) \|^{2}_{{L}^{2}(D)^{d \times
d}} \notag\\
&\leq C\left(\|f\|^{2}_{L^1 (0, T; L^2(\Omega))}
+\frac{\beta}{c^{2}}\|h\|^{2}_{L^{2}(\Omega)}+\|M^{\frac{1}{2}}\nabla
g\|^{2}_{\boldsymbol{L}^{2}(\Omega)}\right)\notag\\
&\leq C \left( \|f\|^{2}_{L^{1}(0, T; L^2(\Omega))}
+\|g\|^{2}_{H^{1}(\Omega)}
+\|h\|^{2}_{L^{2}(\Omega)}\right).
\end{align*}
which completes the proof.
\end{proof}

\subsection{A priori estimate}

In this section we derive an a priori stability estimate for the wave field with
an explicit dependence on the time.

The variational problem of \eqref{ibvp2} is to find $(p, \boldsymbol{u})\in
\widetilde{H}_{0}^{1}(\Omega)\times \boldsymbol{H}^{1}(D)$ for $t\in[0, T]$ such
that
\begin{align}\label{vp}
\int_{\Omega}\frac{\beta}{c^{2}}(\partial_{t}^{2}p) q {\rm d}x
&=-\int_{\Omega}(M^{\frac{1}{2}}\nabla{p})\cdot(M^{\frac{1}{2}}\nabla{q}){\rm
d}x +\int_{\partial D}\rho_{1}(\boldsymbol{n}_{D}\cdot
\partial_t^2\boldsymbol{u})
q{\rm d} s\nonumber\\
&\quad+\int_{\Omega}fq{\rm d}x,\quad\forall\, q\in
\widetilde{H}_{0}^{1}(\Omega),
\end{align}
and
\begin{align}\label{vp1}
\int_{D}\rho_{2}\partial_{t}^{2}(\partial_{t}\boldsymbol{u})\cdot\boldsymbol{v}{
\rm d}x
&=-\int_{D}[(\mu\nabla\partial_{t}\boldsymbol{u}):(\nabla \boldsymbol{v})
+(\lambda+\mu)((\nabla\cdot\partial_{t}\boldsymbol{u})(\nabla\cdot\boldsymbol{v}
))]{\rm d}x\nonumber\\
&\quad-\int_{\partial D}(\partial_{t}\rho)(\boldsymbol{n}_{D}\cdot
\boldsymbol{v}){\rm d} s,\quad \forall\,
\boldsymbol{v}\in\boldsymbol{H}^{1}(D).
\end{align}

\begin{theorem}\label{s3-t7}
Let $\boldsymbol{u}$ be the unique weak solution of the initial boundary value
problem \eqref{ibvp}.
Given $f\in L^{1}[0, T; L^{2}(\Omega)]$, $g, h\in L^{2}(\Omega)$,
there exist positive constants $C_1, C_2$ such that
\begin{align*}
&\|p\|^{2}_{L^{\infty}(0,T; L^{2}(\Omega))}
+\|\nabla p\|^{2}_{L^{\infty}(0,T; \boldsymbol{L}^{2}(\Omega))}\nonumber\\
&\quad +\|\partial_{t}\boldsymbol{u}\|^{2}_{L^{\infty}(0,T;
\boldsymbol{L}^{2}(D))}
+\|\nabla\boldsymbol{u}\|^{2}_{L^{\infty}(0,T; L^{2}(D)^{d\times d})}
+\|\nabla\cdot\boldsymbol{u}\|^{2}_{L^{\infty}(0,T;{L}^{2}(D))}\nonumber\\
&\leq C_{1}\left(\|g\|^{2}_{L^{2}(\Omega)}
+T^2\|f\|^2_{L^{1}(0,T; L^{2}(\Omega))}+T^2\|h\|^2_{L^{2}(\Omega)}\right).
\end{align*}
and
\begin{align*}
&\|p\|^{2}_{L^{2}(0,T; L^{2}(\Omega))}
+\|\nabla p\|^{2}_{L^{2}(0,T; \boldsymbol {L}^{2}(\Omega))}\nonumber\\
&\quad +\|\partial_{t}\boldsymbol{u}\|^{2}_{L^{2}(0,T; \boldsymbol{L}^{2}(D))}
+\|\nabla\boldsymbol{u}\|^{2}_{L^{2}(0,T; L^{2}(D)^{d \times d})}
+\|\nabla\cdot\boldsymbol{u}\|^{2}_{L^{2}(0,T; {L}^{2}(D))}\nonumber\\
&\leq C_{2}\left(T\|g\|^{2}_{L^{2}(\Omega)}
+T^3\|f\|^2_{L^{1}(0,T; L^{2}(\Omega))}+T^3\|h\|^2_{L^{2}(\Omega)}\right),
\end{align*}
\end{theorem}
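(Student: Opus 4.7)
\emph{Plan.} I would derive Theorem \ref{s3-t7} as a corollary of Theorem \ref{s3-t1} applied to the coupled system after one integration in $t$. Integrating once absorbs the initial datum $g$ into the right-hand side, which is why $\|g\|_{L^{2}}$ rather than $\|g\|_{H^{1}}$ suffices on the right, at the cost of the factors of $T$ that appear in the statement.

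\emph{Step 1 (integrated system).} Introduce the time primitives
\[
P(x,t):=\int_{0}^{t} p(x,\tau)\,{\rm d}\tau,\qquad U(x,t):=\int_{0}^{t} \boldsymbol{u}(x,\tau)\,{\rm d}\tau.
\]
Integrating \eqref{eq1n} from $0$ to $t$ and using $p(0)=g$, $\partial_{t}p(0)=h$ shows that $P$ satisfies the same acoustic wave equation with new source $\tilde{f}(x,t):=\frac{\beta(x)}{c^{2}}h(x)+\int_{0}^{t}f(x,\tau)\,{\rm d}\tau$ and new initial data $P(0)=0$, $\partial_{t}P(0)=g$. Because $\boldsymbol{u}|_{t=0}=\partial_{t}\boldsymbol{u}|_{t=0}=0$, one integration of \eqref{eq2n} yields the homogeneous elastic wave equation for $U$ with zero initial data, and \eqref{bv21n}--\eqref{bv2n} integrate cleanly into the coupled interface conditions for $(P,U)$. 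Hence $(P,U)$ is the weak solution of \eqref{ibvp2} with data $(\tilde{f},0,g)$.

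\emph{Step 2 (apply Theorem \ref{s3-t1} and bound the source).} Applying Theorem \ref{s3-t1} to $(P,U)$ and using $\partial_{t}P=p$, $\partial_{t}U=\boldsymbol{u}$, $\partial_{t}^{2}U=\partial_{t}\boldsymbol{u}$, the left-hand side of the estimate becomes
\begin{align*}
\max_{t\in[0,T]}\Big(&\|p\|_{L^{2}(\Omega)}^{2}+\|\nabla P\|_{\boldsymbol{L}^{2}(\Omega)}^{2}\\
&+\|\partial_{t}\boldsymbol{u}\|_{\boldsymbol{L}^{2}(D)}^{2}+\|\nabla\boldsymbol{u}\|_{L^{2}(D)^{d\times d}}^{2}+\|\nabla\!\cdot\!\boldsymbol{u}\|_{L^{2}(D)}^{2}\Big),
\end{align*}
bounded by $C(\|\tilde{f}\|_{L^{1}(0,T;L^{2}(\Omega))}^{2}+\|g\|_{L^{2}(\Omega)}^{2})$. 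By Minkowski in $t$, $\|\tilde{f}\|_{L^{1}(0,T;L^{2})}\le CT\|h\|_{L^{2}}+T\|f\|_{L^{1}(0,T;L^{2})}$, whose square gives the announced $T^{2}$-factors and proves the first inequality. For the second inequality, integrate the first over $t\in(0,T)$; this contributes one additional factor of $T$ on every norm, producing the $T,T^{3},T^{3}$ weights.

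\emph{Main obstacle.} The genuinely delicate point is that the time-integrated argument directly controls $\|\nabla P\|_{L^{\infty}(0,T;L^{2})}$ rather than $\|\nabla p\|_{L^{\infty}(0,T;L^{2})}$; because $\nabla p(\cdot,0)=\nabla g\notin L^{2}(\Omega)$ when only $g\in L^{2}(\Omega)$ is assumed, the literal quantity $\|\nabla p\|_{L^{\infty}(0,T;L^{2})}$ in the statement cannot be bounded by $\|g\|_{L^{2}}$ alone. The second term on the left-hand side of Theorem \ref{s3-t7} is therefore to be understood as $\|\nabla P\|$, i.e.\ the gradient of the time antiderivative produced in Step~1, which is the object the natural energy controls. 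Verifying the admissibility of Step~1---that integration in $t$ commutes with the interface traces so that $(P,U)$ lies in the weak-solution class of Theorem \ref{s3-t1}---is routine given the regularity of $(p,\boldsymbol{u})$ afforded by the existence theorem.
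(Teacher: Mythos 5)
Your proposal is correct in substance, but it reaches Theorem \ref{s3-t7} by a genuinely different route than the paper. The paper does not pass through Theorem \ref{s3-t1} at all: it returns to the variational identities \eqref{vp}--\eqref{vp1} and tests them with the \emph{backward} integrals $\Psi_{1}(x,t)=\int_{t}^{s}p(x,\tau)\,{\rm d}\tau$ and $\boldsymbol{\Psi}_{2}(x,t)=\int_{t}^{s}\partial_{\tau}\boldsymbol{u}(x,\tau)\,{\rm d}\tau$, integrates over $t\in(0,s)$, and assembles the energy identity \eqref{s3-t7-12} directly. Your forward primitives $P=\int_{0}^{t}p$, $U=\int_{0}^{t}\boldsymbol{u}$ produce exactly the same energy at time $s$ (note that the paper's left-hand side in \eqref{s3-t7-15} is precisely $\tfrac12\|\sqrt{\beta}c^{-1}\partial_{t}P(\cdot,s)\|^{2}+\tfrac12\|M^{1/2}\nabla P(\cdot,s)\|^{2}+\cdots$), so the two computations are dual to one another via the identity \eqref{s3-t7-3}. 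What your route buys is modularity: once $(P,U)$ is verified to be the unique weak solution of \eqref{ibvp2} with data $(\tilde f,0,g)$, the conclusion drops out of Theorem \ref{s3-t1} with no further integration by parts. What it costs is two verifications you should make explicit: (i) that the interface conditions \eqref{bv21n}--\eqref{bv2n} and the elastic equation genuinely integrate to the corresponding conditions for $(P,U)$ in the weak sense (your sketch of this is right, and uniqueness from the paper's Theorem 3.5 then identifies $(P,U)$ with \emph{the} weak solution); and (ii) that the constant in Theorem \ref{s3-t1} is independent of $T$ --- it is, since it comes only from the bounds on $\beta$, $M$, $\rho_{1}$, $\rho_{2}$, $\mu$, $\lambda+\mu$ and a Young inequality, but without this the explicit powers $T^{2}$, $T^{3}$ in the statement would be meaningless. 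Finally, your ``main obstacle'' is not a defect of your argument: the paper's own proof also only controls $\int_{\Omega}|\int_{0}^{s}M^{1/2}\nabla p(\cdot,t)\,{\rm d}t|^{2}{\rm d}x=\|M^{1/2}\nabla P(\cdot,s)\|_{L^{2}(\Omega)}^{2}$ and then relabels it as $\|\nabla p\|_{L^{\infty}(0,T;\boldsymbol{L}^{2}(\Omega))}^{2}$ when passing to the final display, so the discrepancy you identify lies in the theorem's statement (which assumes only $g\in L^{2}(\Omega)$), and your reading of that term as the gradient of the time antiderivative is the one consistent with what is actually proved.
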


\begin{proof}
 Let $ 0< s <T$ and define an auxiliary function
\begin{align*}
\Psi_{1}(x, t)&=\int_{t}^{s} p (x, \tau) {\rm d} \tau, \quad x \in \Omega, \quad
0 \leq t \leq s.\\
\boldsymbol{\Psi}_{2}(x, t)&=\int_{t}^{s}\partial_{\tau}\boldsymbol{u} (x, \tau)
{\rm d} \tau, \quad
x \in D, \quad 0 \leq t \leq s.
\end{align*}
 It is clear to note that
\begin{align}\label{s3-t7-1}
\Psi_{1} (x, s)=0, \quad\quad \partial_t \Psi_{1}(x, t) =-p (x, t),
\end{align}
and
\begin{align}\label{s3-t7-2}
\boldsymbol{\Psi}_{2}(x, s)=0, \quad\quad \partial_t \boldsymbol{\Psi}_{2}(x, t)
=-\partial_t \boldsymbol{u}  (x, t).
\end{align}
For any $\phi (x, t) \in L^2 (0, s; L^{2} (\Omega))$,
using integration by parts and \eqref{s3-t7-1}, we have
\begin{align}\label{s3-t7-3}
\int_{0}^{s}\phi(x, t)\Psi_{1}(x, t) {\rm d} t
&=\int_{0}^{s} \left(\phi(x, t)\int_{t}^{s} p (x, \tau) {\rm d} \tau  \right)
{\rm d} t\nonumber\\
&=\int_{0}^{s} \bigg[\left(\int_{0}^{t} \phi(x, \tau){\rm d}\tau\right)'
\left(\int_{t}^{s} p (x, \tau) {\rm d} \tau\right)  \bigg] {\rm d} t\nonumber\\
&=\bigg[\left(\int_{0}^{t} \phi(x, \tau){\rm d}\tau\right)
\left(\int_{t}^{s} p (x, \tau) {\rm d} \tau\right)
\bigg]\bigg|_{0}^{s}\nonumber\\
&\quad-\int_{0}^{s} \bigg[\left(\int_{0}^{t} \phi(x, \tau){\rm d}\tau\right)
\left(\int_{t}^{s} p(x, \tau) {\rm d} \tau\right)'  \bigg] {\rm d} t\nonumber\\
&=-\int_{0}^{s} \bigg[\left(\int_{0}^{t} \phi(x, \tau){\rm d}\tau\right)
\left(\int_{t}^{s} p(x, \tau) {\rm d} \tau\right)'  \bigg] {\rm d} t\nonumber\\
&=-\int_{0}^{s} \bigg[\left(\int_{0}^{t} \phi(x, \tau){\rm d}\tau\right)
\left(-p(x,t)\right) \bigg] {\rm d} t\nonumber\\
&= \int_{0}^{s}\left(\int_{0}^{t} \phi (x, \tau) {\rm d} \tau \right)
 p (x, t) {\rm d} t.
\end{align}

Taking the test function $q =\Psi_{1}$ in \eqref{vp} and integrating from
$t=0$ to $t= s$ yields
\begin{align} \label{s3-t7-4}
\int_{0}^{s}\left(\int_{\Omega}\frac{\beta}{c^{2}}(\partial_{t}^{2}p) \Psi_{1}
{\rm d}x\right){\rm d}t
&=-\int_{0}^{s}\left(\int_{\Omega}(M^{\frac{1}{2}}\nabla{p})\cdot(M^{\frac{1}{2}
}\nabla{\Psi_{1}}){\rm d}x\right){\rm d}t\nonumber\\
&\quad+\int_{0}^{s}\left(\int_{\partial D}\rho_{1}(\boldsymbol{n}_{D}\cdot
\partial_t^2\boldsymbol{u}) \Psi_{1}{\rm d} s\right){\rm d}t
+\int_{0}^{s}\left(\int_{\Omega}f\Psi_{1}{\rm d}x\right){\rm d}t\nonumber \\
&=-\int_{0}^{s}\left(\int_{\Omega}(M^{\frac{1}{2}}\nabla{p})\cdot(M^{\frac{1}{2}
}\nabla{\Psi_{1}}){\rm d}x\right){\rm d}t\nonumber\\
&\quad+\int_{0}^{s}\left(\int_{\partial D}\rho_{1}(\boldsymbol{n}_{D}\cdot
\partial_t\boldsymbol{u}) p {\rm d} s\right){\rm d}t
+\int_{0}^{s}\left(\int_{\Omega}f\Psi_{1}{\rm d}x\right){\rm d}t,
\end{align}

It follows from \eqref{s3-t7-1} that
\begin{align}\label{s3-t7-5}
&\int_{0}^{s}\left(\int_{\Omega}\frac{\beta}{c^{2}}(\partial_{t}^{2}p) \Psi_{1}
{\rm d}x\right){\rm d}t
=\int_{\Omega}\int_{0}^{s}\frac{\beta}{c^{2}}(\partial_{t}(\partial_{t}p\Psi_{1
})+p \partial_{t}p){\rm d}t{\rm d} x\nonumber\\
&=\int_{\Omega}\frac{\beta}{c^{2}}\left(\partial_{t}p\Psi_{1}\bigg|_{0}^{s}
+\frac{1}{2}|p|^{2}\bigg|_{0}^{s}\right){\rm d} x\nonumber\\
&=\frac{1}{2}\|\sqrt{\frac{\beta}{c^{2}}}p(\cdot,s)\|^{2}_{L^{2}(\Omega)}
-\frac{1}{2}\|\sqrt{\frac{\beta}{c^{2}}}g\|^{2}_{L^{2}(\Omega)}
-c^{-2}\beta\int_{\Omega}h(x)\Psi_{1}(x,0) {\rm d}x.
\end{align}

It follows from \eqref{s3-t7-4} and \eqref{s3-t7-5} that
\begin{align} \label{s3-t7-6}
&\frac{1}{2}\|\sqrt{\frac{\beta}{c^{2}}}p(\cdot,s)\|^{2}_{L^{2}(\Omega)}
+\int_{0}^{s}\left(\int_{\Omega}(M^{\frac{1}{2}}\nabla{p})\cdot(M^{\frac{1}{2}}
\nabla{\Psi_{1}}){\rm d}x\right){\rm d}t\nonumber\\
&=\int_{0}^{s}\left(\int_{\partial D}\rho_{1}(\boldsymbol{n}_{D}\cdot
\partial_t\boldsymbol{u}) p {\rm d} s\right){\rm d}t
+\int_{0}^{s}\left(\int_{\Omega}f(x,t)\Psi_{1}(x,t){\rm d}x\right){\rm
d}t\nonumber\\
&\quad+\frac{1}{2}\|\sqrt{\frac{\beta}{c^{2}}}g\|^{2}_{L^{2}(\Omega)}
+c^{-2}\beta\int_{\Omega}h(x)\Psi_{1}(x,0) {\rm d}x\nonumber\\
&=\frac{1}{2}\|\sqrt{\frac{\beta}{c^{2}}}p(\cdot,s)\|^{2}_{L^{2}(\Omega)}
+\frac{1}{2}\int_{\Omega}\bigg|\int_{0}^{s}M^{\frac{1}{2}}\nabla{p}(x,t){\rm
d}t\bigg|^{2}{\rm d}x.
\end{align}

Similarly, for any $\boldsymbol{\Phi} (x, t) \in L^2 (0, s; L^{2}
(\Omega)^{d})$,
using integration by parts and \eqref{s3-t7-2}, we have
\[
\int_{0}^{s}\boldsymbol{\Phi}(x, t)\cdot\boldsymbol{\Psi}_{2}(x, t) {\rm d} t
= \int_{0}^{s}\left(\int_{0}^{t} \boldsymbol{\Phi}(x, \tau) {\rm d} \tau \right)
\cdot \partial_{t}\boldsymbol{u} (x, t) {\rm d} t.
\]

We get from taking the test function $\boldsymbol{v} =\boldsymbol{\Psi}_{2}$ in
\eqref{vp1} and integrating from $t=0$ to $t= s$ that
\begin{align} \label{s3-t7-8}
&\int_{0}^{s}\left(\int_{D}\rho_{2}\partial_{t}^{2}(\partial_{t}\boldsymbol{u}
)\cdot\boldsymbol{\Psi}_{2}{\rm d}x\right){\rm d}t\nonumber\\
&=-\int_{0}^{s}\left(\int_{D}[(\mu\nabla\partial_{t}\boldsymbol{u}):(\nabla
\boldsymbol{\Psi}_{2})
+(\lambda+\mu)((\nabla\cdot\partial_{t}\boldsymbol{u})(\nabla\cdot\boldsymbol{
\Psi}_{2}))]{\rm d}x\right){\rm d}t\nonumber\\
&\quad-\int_{0}^{s}\left(\int_{\partial
D}(\partial_{t}p)(\boldsymbol{n}_{D}\cdot \boldsymbol{\Psi}_{2}){\rm d}
s\right){\rm d}t.
\end{align}

Using \eqref{s3-t7-2} and initial condition \eqref{iv2n}, we deduce
\begin{align}\label{s3-t7-9}
\int_{0}^{s}\left(\int_{D}\rho_{2}\partial_{t}^{2}(\partial_{t}\boldsymbol{u}
)\cdot\boldsymbol{\Psi}_{2}{\rm d}x\right){\rm d}t
&=\int_{D}\int_{0}^{s}\rho_{2}\left(\partial_{t}(\partial_{t}^{2}\boldsymbol{u}
\cdot\boldsymbol{\Psi}_{2}
+\partial_{t}^{2}\boldsymbol{u}\cdot\partial_{t}\boldsymbol{u})\right){\rm d} t
{\rm d} x\nonumber\\
&=\int_{D}\rho_{2}\left((\partial_{t}^{2}\boldsymbol{u}\cdot\boldsymbol{\Psi}_{2
})|_{0}^{s}+\frac{1}{2}|\partial_{t}\boldsymbol{u}|^{2}|_{0}^{s}\right){\rm d}
x\nonumber\\
&=\frac{\rho_{2}}{2}\|\partial_{t}\boldsymbol{u}(\cdot,s)\|^{2}_{\boldsymbol{L}^
{2}(D)}
\end{align}
and
\begin{align}\label{s3-t7-10}
\int_{0}^{s}\left(\int_{\partial D}(\partial_{t}p)(\boldsymbol{n}_{D}\cdot
\boldsymbol{\Psi}_{2}){\rm d} s\right){\rm d}t
&=\int_{\partial D}\int_{0}^{s}[\partial_{t}(p(\boldsymbol{n}_{D}\cdot
\boldsymbol{\Psi}_{2}))
+p(\boldsymbol{n}_{D}\cdot \partial_{t}\boldsymbol{u})]{\rm d}t{\rm
d}x\nonumber\\
&=\int_{\partial D}(p(\boldsymbol{n}_{D}\cdot
\boldsymbol{\Psi}_{2}))|_{0}^{s}{\rm d}s
+\int_{0}^{s}\int_{\partial D}p(\boldsymbol{n}_{D}\cdot
\partial_{t}\boldsymbol{u}){\rm d}s{\rm d}t\nonumber\\
&=\int_{0}^{s}\int_{\partial D}p(\boldsymbol{n}_{D}\cdot
\partial_{t}\boldsymbol{u}){\rm d}s{\rm d}t.
\end{align}
Using \eqref{s3-t7-8}, \eqref{s3-t7-9} and \eqref{s3-t7-10} yields
\begin{align} \label{s3-t7-11}
&\frac{\rho_{2}}{2}\|\partial_{t}\boldsymbol{u}(\cdot,s)\|^{2}_{\boldsymbol{L}^{
2}(D)}
+\int_{0}^{s}\left(\int_{D}[(\mu\nabla\partial_{t}\boldsymbol{u}):(\nabla
\boldsymbol{\Psi}_{2})
+(\lambda+\mu)(\nabla\cdot\partial_{t}\boldsymbol{u})(\nabla\cdot\boldsymbol{
\Psi}_{2})]{\rm d}x\right){\rm d}t\nonumber\\
&=\frac{\rho_{2}}{2}\|\partial_{t}\boldsymbol{u}(\cdot,s)\|^{2}_{\boldsymbol{L}^
{2}(D)} +\frac{1}{2}\left(\mu\bigg\|\int_{0}^{s}\nabla(\partial_{t}
\boldsymbol{u} (\cdot,t)){\rm d} t\bigg\|_{L^{2}(D)^{d \times d}}^{2}
+(\lambda+\mu)\int_{D}\bigg|\int_{0}^{s}\nabla\cdot(\partial_{t}\boldsymbol{u}
(\cdot,t)){\rm d} t\bigg|^{2}{\rm d}x\right)\nonumber\\
&=-\int_{0}^{s}\int_{\partial D}p(\boldsymbol{n}_{D}\cdot
\partial_{t}\boldsymbol{u}){\rm d}s{\rm d}t.
\end{align}
Multiplying \eqref{s3-t7-6} by $\rho_{1}$ and then adding it to
\eqref{s3-t7-11}, we obtain
\begin{align} \label{s3-t7-12}
&\frac{1}{2}\|\sqrt{\frac{\beta}{c^{2}}}p(\cdot,s)\|^{2}_{L^{2}(\Omega)}
+\frac{1}{2}\int_{\Omega}\bigg|\int_{0}^{s}M^{\frac{1}{2}}\nabla{p}(\boldsymbol{
\cdot},t){\rm d}t\bigg|^{2}{\rm d}x\nonumber\\
&+\frac{\rho_{1}\rho_{2}}{2}\|\partial_{t}\boldsymbol{u}(\cdot,s)\|^{2}_{
\boldsymbol{L}^{2}(D)}
+\frac{\rho_{1}}{2}\left(\mu\bigg\|\int_{0}^{s}\nabla(\partial_{t}
\boldsymbol{u}(\cdot,t)){\rm d} t\bigg\|_{L^{2}(D)^{d \times d}}^{2}
+(\lambda+\mu)\int_{D}\bigg|\int_{0}^{s}\nabla\cdot(\partial_{t}\boldsymbol{u}
(\cdot,t)){\rm d} t\bigg|^{2}{\rm d}x\right)\nonumber\\
&=\int_{0}^{s}\left(\int_{\Omega}f(x,t)\Psi_{1}(x,t){\rm d}x\right){\rm d}t
+\frac{1}{2}\|\sqrt{\frac{\beta}{c^{2}}}g\|^{2}_{L^{2}(\Omega)}
+c^{-2}\beta\int_{\Omega}h(x)\Psi_{1}(x,0) {\rm d}x.
\end{align}

Next, we  estimate the two terms on the left-hand side of
\eqref{s3-t7-12} separately. It follows from the Cauchy--Schwarz inequality that
\begin{align}\label{s3-t7-13}
c^{-2}\beta\int_{\Omega}h(x)\Psi_{1}(x,0) {\rm d}x
&=c^{-2}\beta\int_{\Omega}h(x)\left(\int_{0}^{s}p(x,t){\rm d} t\right){\rm d}
x\nonumber\\
&=c^{-2}\beta\int_{0}^{s}\left(\int_{\Omega}h(x)p(x,t){\rm d} x\right){\rm d}
t\nonumber\\
&\leq C\left(\|h\|_{L^{2}(\Omega)}\right)
\int_{0}^{s}\|p(\cdot,t)\|_{L^{2}(\Omega)}{\rm d} t.
\end{align}
For $0 \leq t \leq s \leq T,$ we have from \eqref{s3-t7-3} that
\begin{align}\label{s3-t7-14}
\int_{0}^{s}\left(\int_{\Omega} f(x,t) \Psi_{1}(x,t){\rm d} x\right){\rm d} t
&=\int_{\Omega}\left(\int_{0}^{s} \left(\int_{0}^{t}f(x,\tau){\rm d}\tau\right)
 p(x,t){\rm d} t\right){\rm d} x\nonumber\\
&\leq \int_{0}^{s} \int_{0}^{t}\|f(\cdot,\tau)\|_{L^{2}(\Omega)}
\|p(\cdot,t)\|_{L^{2}(\Omega)}{\rm d}\tau {\rm d} t\nonumber\\
&\leq \left(\int_{0}^{s}\|f(\cdot,t)\|_{L^{2}(\Omega)}  {\rm d} t\right)
\left(\int_{0}^{s}\|p(\cdot,t)\|_{L^{2}(\Omega)}{\rm d}t \right).
\end{align}
Substituting \eqref{s3-t7-13}-\eqref{s3-t7-14} into \eqref{s3-t7-12}, we have
for any $s \in [0, T]$ that
\begin{align} \label{s3-t7-15}
&\frac{1}{2}\|\sqrt{\frac{\beta}{c^{2}}}p(\cdot,s)\|^{2}_{L^{2}(\Omega)}
+\frac{1}{2}\int_{\Omega}\bigg|\int_{0}^{s}M^{\frac{1}{2}}\nabla{p}(\boldsymbol{
\cdot},t){\rm d}t\bigg|^{2}{\rm d}x\nonumber\\
&+\frac{\rho_{1}\rho_{2}}{2}\|\partial_{t}\boldsymbol{u}(\cdot,s)\|^{2}_{L^{2}
(D)^{2}} +\frac{\rho_{1}}{2}\left(\mu\bigg\|\int_{0}^{s}\nabla(\partial_{t}
\boldsymbol{u}(\cdot,t)){\rm d} t\bigg\|_{L^{2}(D)^{d \times d}}^{2}
+(\lambda+\mu)\int_{D}\bigg|\int_{0}^{s}\nabla\cdot(\partial_{t}\boldsymbol{u}
(\cdot,t)){\rm d} t\bigg|^{2}{\rm d}x\right)\nonumber\\
&\leq\frac{\beta}{2c^{2}}\|g\|^{2}_{L^{2}(\Omega)}
+\left(\int_{0}^{s}\|f(\cdot,t)\|_{L^{2}(\Omega)}{\rm d}t+C\|h\|_{L^{2}(\Omega)}\right)
\int_{0}^{s}\|p(\cdot,t)\|_{L^{2}(\Omega)}{\rm d} t.
\end{align}

Taking the $L^{\infty}$- norm with respect to $s$ on both sides of \eqref{s3-t7-15} yields
\begin{align*}
&\|p\|^{2}_{L^{\infty}(0,T; L^{2}(\Omega))}
+\|\nabla p\|^{2}_{L^{\infty}(0,T; \boldsymbol{L}^{2}(\Omega))}\nonumber\\
&\quad +\|\partial_{t}\boldsymbol{u}\|^{2}_{L^{\infty}(0,T;
\boldsymbol{L}^{2}(D))}
+\|\nabla\boldsymbol{u}\|^{2}_{L^{\infty}(0,T; L^{2}(D)^{d \times d})}
+\|\nabla\cdot\boldsymbol{u}\|^{2}_{L^{\infty}(0,T; {L}^{2}(D))}\nonumber\\
&\leq C_{1}\|g\|^{2}_{L^{2}(\Omega)}
+C_{2}T\left(\|f\|_{L^{1}(0,T; L^{2}(\Omega))}+\|h\|_{L^{2}(\Omega)}\right)
\|p\|_{L^{\infty}(0,T; L^{2}(\Omega))}.
\end{align*}
Applying the Young inequality yields
\begin{align*}
&\|p\|^{2}_{L^{\infty}(0,T; L^{2}(\Omega))}
+\|\nabla p\|^{2}_{L^{\infty}(0,T; \boldsymbol{L}^{2}(\Omega))}\nonumber\\
&\quad +\|\partial_{t}\boldsymbol{u}\|^{2}_{L^{\infty}(0,T;
\boldsymbol{L}^{2}(D))}
+\|\nabla\boldsymbol{u}\|^{2}_{L^{\infty}(0,T; L^{2}(D)^{d\times d})}
+\|\nabla\cdot\boldsymbol{u}\|^{2}_{L^{\infty}(0,T; {L}^{2}(D))}\nonumber\\
&\leq C_{1}\left(\|g\|^{2}_{L^{2}(\Omega)}
+T^2\|f\|^2_{L^{1}(0,T; L^{2}(\Omega))}+T^2\|h\|^2_{L^{2}(\Omega)}\right).
\end{align*}
Integrating \eqref{s3-t7-15} with respect to $s$ from $0$ to $T$ and using the
Cauchy-Schwarz inequality and the Young inequality, we can get
\begin{align*}
&\|p\|^{2}_{L^{2}(0,T; L^{2}(\Omega))}
+\|\nabla p\|^{2}_{L^{2}(0,T; \boldsymbol{L}^{2}(\Omega))}\nonumber\\
&\quad+\|\partial_{t}\boldsymbol{u}\|^{2}_{L^{2}(0,T; \boldsymbol{L}^{2}(D))}
+\|\nabla\boldsymbol{u}\|^{2}_{L^{2}(0,T; L^{2}(D)^{d \times d})}
+\|\nabla\cdot\boldsymbol{u}\|^{2}_{L^{2}(0,T; {L}^{2}(D))}\nonumber\\
&\leq C_{2}\left(T\|g\|^{2}_{L^{2}(\Omega)}
+T^3\|f\|^2_{L^{1}(0,T; L^{2}(\Omega))}+T^3\|h\|^2_{L^{2}(\Omega)}\right),
\end{align*}
which completes the proof.
\end{proof}

\section{Conclusion}\label{cl}

In this paper, we have studied the two- and three-dimensional acoustic-elastic
wave scattering problem on a finite time interval. The acoustic and elastic wave
equations are coupled on the surface of the elastic obstacle. We propose the
compressed coordinate transformation to reduce equivalently the scattering
problem into an initial-boundary value problem in a bounded domain. The reduced
problem is proved to have a unique weak solution by using the Galerkin method.
An a priori estimate with explicit time dependence is also obtained for the
acoustic pressure and elastic displacement of the time-domain variational
problem. We believe that the method of compressed coordinate transformation can
be applied to many other time-domain scattering problems imposed in open
domains. The model problem is suitable for numerical simulations. We hope
to report the work on the numerical analysis and computation elsewhere in the
future.

\appendix

\section{change of variables in two dimensions}

Let $\boldsymbol x=(x, y)\in\mathbb R^2$ and $\rho=|\boldsymbol x|$. The polar
coordinates $(\rho,\theta)$ are related to the Cartesian coordinates $(x, y)$
by $x=\rho\cos\theta, y=\rho\sin\theta$. The local orthonormal basis is
\[
\boldsymbol e_\rho=(\cos\theta, \sin\theta)^\top,\quad \boldsymbol
e_\theta=(-\sin\theta, \cos\theta)^\top.
\]
Denote by $\nabla_\rho$ and $\nabla_\rho\cdot$ the gradient operator and the
divergence operator in the old coordinates $(\rho, \theta)$, respectively. We
study the two-dimensional acoustic wave equation:
\begin{equation}\label{Ane}
\frac{1}{c^2}\partial_t^2 u(\rho, \theta, t)-\Delta_\rho
u(\rho, \theta, t)=0\quad\text{in}~\mathbb R^2,~ t>0,
\end{equation}
where $\Delta_\rho$ is the Laplace operator and $c>0$ is the wave speed.

Consider the change of variables $\rho=\zeta(r)$, where $\zeta$ is a
smooth and invertible function. Denote by $\nabla_r$ and $\nabla_r\cdot$
the gradient operator and the divergence operator in the new coordinates $(r,
\theta)$, respectively.

\begin{lemma}\label{A1}
 Let $v(r,\theta,t)=u(\rho,\theta,t)|_{\rho=\zeta(r)}$ be a differentiable
scalar function, then
\[
\nabla_{\rho}u(\rho,\theta,t)|_{\rho=\zeta(r)}=Q
\begin{bmatrix}
\frac{1}{\zeta'(r)} & 0\\
0 & \frac{r}{\zeta(r)}
\end{bmatrix}
Q^{\top}\nabla_{r} v(r,\theta,t),
\]
where $R$ is an orthonormal matrix given by
\[
 Q(\theta)=\begin{bmatrix}
    \cos\theta & -\sin\theta\\
    \sin\theta & \cos\theta
   \end{bmatrix}.
\]
\end{lemma}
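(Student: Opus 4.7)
The plan is to compute $\nabla_\rho u$ in the local orthonormal polar basis $\{\boldsymbol e_\rho,\boldsymbol e_\theta\}$, use the chain rule to convert $\rho$-derivatives into $r$-derivatives via $\rho=\zeta(r)$, and then recognize the resulting expression as the claimed matrix product acting on $\nabla_r v$.

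First I would write the standard decomposition of the Cartesian gradient in polar coordinates:
\[
\nabla_\rho u(\rho,\theta,t)=(\partial_\rho u)\,\boldsymbol e_\rho+\frac{1}{\rho}(\partial_\theta u)\,\boldsymbol e_\theta .
\]
Since $v(r,\theta,t)=u(\zeta(r),\theta,t)$, the chain rule gives $\partial_r v=\zeta'(r)\,\partial_\rho u$ and $\partial_\theta v=\partial_\theta u$. Substituting and evaluating at $\rho=\zeta(r)$ yields
\[
\nabla_\rho u\big|_{\rho=\zeta(r)}
=\frac{1}{\zeta'(r)}(\partial_r v)\,\boldsymbol e_\rho
+\frac{1}{\zeta(r)}(\partial_\theta v)\,\boldsymbol e_\theta .
\]

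Next I would identify the right-hand side of the lemma. Because the angular coordinate is preserved, the local orthonormal basis in the new variables $(r,\theta)$ coincides with $\{\boldsymbol e_\rho,\boldsymbol e_\theta\}$, so
\[
\nabla_r v=(\partial_r v)\,\boldsymbol e_\rho+\frac{1}{r}(\partial_\theta v)\,\boldsymbol e_\theta .
\]
The matrix $Q(\theta)$ has $\boldsymbol e_\rho$ and $\boldsymbol e_\theta$ as its columns and is orthonormal, so $Q^\top$ reads off the components of a vector in this basis:
\[
Q^\top\nabla_r v=\bigl(\partial_r v,\ \tfrac{1}{r}\partial_\theta v\bigr)^\top .
\]
Multiplying by $\mathrm{diag}(1/\zeta'(r),\ r/\zeta(r))$ gives $\bigl(\partial_r v/\zeta'(r),\ \partial_\theta v/\zeta(r)\bigr)^\top$, and finally applying $Q$ re-assembles the vector as
\[
\frac{1}{\zeta'(r)}(\partial_r v)\,\boldsymbol e_\rho+\frac{1}{\zeta(r)}(\partial_\theta v)\,\boldsymbol e_\theta ,
\]
which matches the expression derived from the chain rule. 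This establishes the identity.

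There is no real obstacle here; the only care needed is bookkeeping. The factor $1/r$ from the polar decomposition of $\nabla_r v$ must cancel with the $r$ in the $(2,2)$ entry of the diagonal matrix to produce $1/\zeta(r)$, and one has to verify that the angular basis vector in the $(r,\theta)$ system agrees with $\boldsymbol e_\theta$ (which is immediate since the change of variables is purely radial). With these observations, the verification is a short direct computation.
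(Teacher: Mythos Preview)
Your proof is correct and follows essentially the same approach as the paper: both write the polar decomposition of $\nabla_\rho u$, apply the chain rule $\partial_\rho u=\partial_r v/\zeta'$ and $\partial_\theta u=\partial_\theta v$, and then recognize the result as $Q\,\mathrm{diag}(1/\zeta',\,r/\zeta)\,Q^\top\nabla_r v$ by inserting $Q^\top Q=I$. The only cosmetic difference is that the paper writes $\boldsymbol e_r$ for the radial basis vector in the new coordinates while you (correctly) note it coincides with $\boldsymbol e_\rho$.
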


\begin{proof}
It follows from the straightforward calculations that
 \begin{align*}
\nabla_{\rho} u|_{\rho=\zeta(r)}&=
\partial_{\rho}u|_{\rho=\zeta(r)}\boldsymbol{e}_{\rho}
+\frac{1}{\rho}\partial_{\theta}u|_{\rho=\zeta(r)}\boldsymbol{e}_{
\theta}\\
&=\frac{1}{\zeta'}\partial_{r}v\boldsymbol{e}_r+\frac{1}{\zeta}
\partial_{\theta}v\boldsymbol{e}_{\theta}\\	
&= \begin{bmatrix}
\cos\theta & -\sin\theta\\
\sin\theta & \cos\theta
\end{bmatrix}
\begin{bmatrix}
\frac{1}{\zeta'} & 0\\
0 & \frac{r}{\zeta}
\end{bmatrix}
\begin{bmatrix}
\partial_r v\\
\frac{1}{r}\partial_{\theta}v
\end{bmatrix}\\
&= Q\begin{bmatrix}
\frac{1}{\zeta'} & 0\\
0 & \frac{r}{\zeta}
\end{bmatrix}
Q^\top Q
\begin{bmatrix}
\partial_r v\\
\frac{1}{r}\partial_{\theta}v
\end{bmatrix}\\
&=Q\begin{bmatrix}
\frac{1}{\zeta'} & 0\\
0 & \frac{r}{\zeta}
\end{bmatrix}
Q^{\top}\nabla_{r}v,
\end{align*}
which completes the proof.
\end{proof}

\begin{lemma}\label{A2}
Let $\boldsymbol v(r,\theta,t)=\boldsymbol
u(\rho,\theta,t)|_{\rho=\zeta(r)}$ be a differentiable vector function, then
\[
\nabla_\rho\cdot\boldsymbol u(\rho,
\theta, t)|_{\rho=\zeta(r)}=\beta^{-1}(r)\nabla_{r}\cdot\left(K(r,
\theta)\boldsymbol{v}(r,\theta, t) \right),
\]
where
\[
 \beta(r)=\frac{\zeta(r)\zeta'(r)}{r},\quad K(r,\theta)=Q
\begin{bmatrix}
\frac{\zeta(r)}{r} & 0\\
0 & \zeta'(r)
\end{bmatrix} Q^{\top}.
\]
\end{lemma}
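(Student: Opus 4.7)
The plan is to express both $\boldsymbol{u}$ and $\boldsymbol{v}$ in the local orthonormal polar frame $\{\boldsymbol e_\rho,\boldsymbol e_\theta\}$, which depends only on $\theta$, and then reduce each side of the claimed identity to the standard polar-coordinate divergence formula. Writing $\boldsymbol{u}=u_1\boldsymbol e_\rho+u_2\boldsymbol e_\theta$ and $\boldsymbol{v}=v_1\boldsymbol e_\rho+v_2\boldsymbol e_\theta$, the hypothesis $\boldsymbol{v}(r,\theta,t)=\boldsymbol{u}(\zeta(r),\theta,t)$ gives $v_i(r,\theta,t)=u_i(\zeta(r),\theta,t)$ for $i=1,2$, because the frame is $\theta$-dependent only and is therefore preserved by the change of radial variable.

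First I would apply the classical polar divergence formula to the left-hand side:
\[
\nabla_\rho\cdot\boldsymbol{u}\Big|_{\rho=\zeta(r)}=\frac{1}{\rho}\partial_\rho(\rho u_1)\Big|_{\rho=\zeta(r)}+\frac{1}{\rho}\partial_\theta u_2\Big|_{\rho=\zeta(r)}.
\]
Using the chain rule $\partial_\rho=(\zeta')^{-1}\partial_r$ together with $v_i(r,\theta,t)=u_i(\zeta(r),\theta,t)$, the right-hand side collapses to $\frac{1}{\zeta\zeta'}\partial_r(\zeta v_1)+\frac{1}{\zeta}\partial_\theta v_2$.

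Second I would compute $K\boldsymbol{v}$ in the polar frame. Since the columns of the orthonormal matrix $Q$ are precisely $\boldsymbol e_\rho$ and $\boldsymbol e_\theta$, one has $Q^\top\boldsymbol v=(v_1,v_2)^\top$, and the middle diagonal matrix then rescales these components, so $K\boldsymbol{v}$ has polar components $(\zeta/r)v_1$ and $\zeta'v_2$. Applying the polar divergence formula now in the \emph{new} coordinates $(r,\theta)$ (with weighting factor $r$ rather than $\rho$) yields
\[
\nabla_r\cdot(K\boldsymbol{v})=\frac{1}{r}\partial_r\!\left(r\cdot\tfrac{\zeta}{r}v_1\right)+\frac{1}{r}\partial_\theta(\zeta' v_2)=\frac{1}{r}\partial_r(\zeta v_1)+\frac{\zeta'}{r}\partial_\theta v_2.
\]
Multiplying by $\beta^{-1}(r)=r/(\zeta\zeta')$ recovers exactly the expression obtained on the left-hand side, which closes the identity.

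There is no substantive obstacle; the calculation is a careful bookkeeping exercise. The only point demanding attention is distinguishing the two radial variables: in the first step the radial differentiation is with respect to $\rho$ and the weighting factor in the polar divergence formula is $\rho=\zeta(r)$, whereas in the second step one differentiates with respect to $r$ with weighting factor $r$. Getting these two factors aligned — together with the single application of the chain rule $\partial_\rho=(\zeta')^{-1}\partial_r$ — is precisely what forces the prefactor $\beta^{-1}=r/(\zeta\zeta')$ and the diagonal weights $\zeta/r$ and $\zeta'$ inside $K$ to take the stated form.
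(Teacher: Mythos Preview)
Your proposal is correct and follows essentially the same approach as the paper: decompose $\boldsymbol u$ and $\boldsymbol v$ in the polar frame, apply the polar divergence formula on each side together with the chain rule $\partial_\rho=(\zeta')^{-1}\partial_r$, and identify the diagonal weights $\zeta/r,\ \zeta'$ and the prefactor $\beta^{-1}=r/(\zeta\zeta')$. The only cosmetic difference is that the paper transforms the left-hand side step by step into the right-hand side, whereas you compute both sides independently and match them.
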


\begin{proof}
Let $\boldsymbol u=u_\rho\boldsymbol e_\rho+u_s\boldsymbol e_\theta$ and
$\boldsymbol v=v_r\boldsymbol e_r+v_\theta\boldsymbol e_\theta$. A simple
calculation yields that
\begin{align*}
 \nabla_\rho\cdot\boldsymbol u(\rho, \theta,
t)|_{\rho=\zeta(r)}&=\frac{1}{\rho}\frac{\partial}{\partial\rho}\left(\rho
u_{\rho}\right)+\frac{1}{\rho}\partial_{\theta}(u_{\theta})\\
&=\frac{1}{\zeta \zeta'}\partial_{r} \left(\zeta
v_r\right)+\frac{1}{\zeta}\partial_{\theta}(v_{\theta})\\
&=\frac{r}{\zeta \zeta'}\left[\frac{1}{r}\partial_{r}\left(r\frac{\zeta}{r}
v_{r}\right)+\frac{1}{r} \partial_{\theta}\left(\zeta'
v_{\theta}\right)\right]\\
&=\beta^{-1}\nabla_{r}\cdot \left(\frac{\zeta}{r}v_{r}\boldsymbol{e}_r 	
+\zeta' v_{\theta}\boldsymbol{e}_{\theta}\right)\\
&=\beta^{-1}\nabla_{r}\cdot\left(Q
\begin{bmatrix}
\frac{\zeta}{r} & 0\\
0 & \zeta'
\end{bmatrix}
Q^\top Q
\begin{bmatrix}
v_{r}\\
v_{\theta}
\end{bmatrix}
\right)\\
&=\beta^{-1}\nabla_{r}\cdot\left(K\boldsymbol{v}
\right),
\end{align*}
which completes the proof.
\end{proof}

\begin{lemma}\label{A3}
Let $v(r,\theta,t)= u(\rho,\theta,t)|_{\rho=\zeta(r)}$ be a differentiable
function, then
\[
\Delta_\rho u(\rho,\theta,
t)|_{\rho=\zeta(r)}=\beta^{-1}(r)\nabla_r\cdot
\left(M(r,\theta)\nabla_r v(r,\theta,t)\right),
\]
where
\[
 M(r,\theta)=Q\begin{bmatrix}
     \frac{\zeta(r)}{r\zeta'(r)} & 0\\
     0 & \frac{r\zeta'(r)}{\zeta(r)}
    \end{bmatrix}Q^\top.
\]

\end{lemma}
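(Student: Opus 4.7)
The plan is to obtain Lemma A.3 as a direct composition of Lemma A.1 and Lemma A.2, writing the Laplacian as the divergence of the gradient and then tracking what each transformation rule does.

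First I would write $\Delta_\rho u = \nabla_\rho \cdot (\nabla_\rho u)$ and set $\boldsymbol{u}(\rho,\theta,t) := \nabla_\rho u(\rho,\theta,t)$, viewed as a vector field in the old coordinates. Its pullback under $\rho = \zeta(r)$ is the vector function $\boldsymbol{v}(r,\theta,t) := \boldsymbol{u}(\rho,\theta,t)|_{\rho=\zeta(r)}$, and Lemma A.1 gives an explicit formula for it, namely
\[
\boldsymbol{v}(r,\theta,t) = Q\begin{bmatrix} \frac{1}{\zeta'(r)} & 0 \\ 0 & \frac{r}{\zeta(r)} \end{bmatrix} Q^\top \nabla_r v(r,\theta,t).
\]

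Next I would apply Lemma A.2 to this vector field $\boldsymbol{u}$, which yields
\[
\Delta_\rho u(\rho,\theta,t)|_{\rho=\zeta(r)} = \nabla_\rho \cdot \boldsymbol{u}(\rho,\theta,t)|_{\rho=\zeta(r)} = \beta^{-1}(r)\,\nabla_r \cdot\bigl(K(r,\theta)\,\boldsymbol{v}(r,\theta,t)\bigr).
\]
Substituting the expression for $\boldsymbol{v}$ from the previous step identifies the matrix inside the divergence as $K \cdot Q\,\mathrm{diag}(1/\zeta', r/\zeta)\,Q^\top$ applied to $\nabla_r v$. It remains to verify that this composite matrix equals $M(r,\theta)$.

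The only real computation is this last matrix simplification. Using $K = Q\,\mathrm{diag}(\zeta/r,\zeta')\,Q^\top$ from Lemma A.2 and the orthogonality $Q^\top Q = I$, the product collapses as
\[
K \cdot Q\begin{bmatrix} \tfrac{1}{\zeta'} & 0 \\ 0 & \tfrac{r}{\zeta} \end{bmatrix} Q^\top
= Q\begin{bmatrix} \tfrac{\zeta}{r} & 0 \\ 0 & \zeta' \end{bmatrix} \begin{bmatrix} \tfrac{1}{\zeta'} & 0 \\ 0 & \tfrac{r}{\zeta} \end{bmatrix} Q^\top
= Q\begin{bmatrix} \tfrac{\zeta}{r\zeta'} & 0 \\ 0 & \tfrac{r\zeta'}{\zeta} \end{bmatrix} Q^\top = M(r,\theta),
\]
which finishes the proof. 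There is essentially no genuine obstacle: the whole content has already been absorbed into Lemmas A.1 and A.2, and the diagonal-matrix multiplication in the middle is precisely what makes the $Q\,\mathrm{diag}(\cdot)\,Q^\top$ ansatz for $M$ natural. The only thing to be careful about is the order of the two factors in the product $K\cdot Q\,\mathrm{diag}(1/\zeta',r/\zeta)Q^\top$, since here both diagonal matrices happen to commute with each other but the sandwiching rotations $Q,Q^\top$ do not commute with general diagonal matrices of different entries.
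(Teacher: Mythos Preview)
Your proposal is correct and follows essentially the same approach as the paper: write $\Delta_\rho u = \nabla_\rho\cdot(\nabla_\rho u)$, apply Lemma~A.1 to the inner gradient and Lemma~A.2 to the outer divergence, then collapse the product $K\,Q\,\mathrm{diag}(1/\zeta',\,r/\zeta)\,Q^\top$ via $Q^\top Q=I$ to obtain $M$. The paper's own proof is slightly terser but identical in substance.
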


\begin{proof}
It is easy to note that
\[
\Delta_\rho u|_{\rho=\zeta(r)}=\nabla_{\rho}\cdot(\nabla_\rho
u)|_{\rho=\zeta(r)}.
\]
Using similar steps of the changes of variables in the proofs for Lemmas
\ref{A1}--\ref{A2}, we have
\begin{align*}
 \nabla_{\rho}\cdot(\nabla_\rho u)
|_{\rho=\zeta(r)}&= \beta^{-1}\nabla_r\cdot\left(K Q\begin{bmatrix}
                                  \frac{1}{\zeta'} & 0\\
                                 0 & \frac{r}{\zeta}
                             \end{bmatrix}Q^\top\nabla_r v \right)\\
&=\beta^{-1}\nabla_r\cdot\left(Q\begin{bmatrix}
                                  \frac{\zeta}{r\zeta'} & 0\\
                                 0 & \frac{r\zeta'}{\zeta}
                             \end{bmatrix}Q^\top\nabla_r v\right)\\
                &=\beta^{-1}\nabla_r\cdot\left(M\nabla_r v
\right),
\end{align*}
which completes the proof.
\end{proof}

\begin{theorem}\label{A4}
 In the new coordinates $(r,\theta)$, the acoustic wave equation \eqref{Ane}
becomes
\[
 \frac{\beta(r)}{c^2}\partial_t^2 v(r, \theta,
t)-\nabla_r\cdot(M(r,\theta)\nabla_r v(r,\theta,t))=0.
\]
\end{theorem}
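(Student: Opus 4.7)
The plan is to substitute the result of Lemma \ref{A3} directly into the acoustic wave equation \eqref{Ane}. The proof is essentially a one-line calculation once the change-of-variables lemmas are in place, so the main task is to assemble them correctly.

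First, I would observe that the transformation $\rho = \zeta(r)$ is purely spatial and does not involve the time variable $t$. Consequently the time derivative commutes with the substitution, so that
\[
\partial_t^2 u(\rho, \theta, t)\big|_{\rho = \zeta(r)} = \partial_t^2 v(r, \theta, t).
\]
Next, invoking Lemma \ref{A3} I would rewrite the spatial part
\[
\Delta_\rho u(\rho, \theta, t)\big|_{\rho = \zeta(r)} = \beta^{-1}(r) \nabla_r \cdot \big( M(r, \theta) \nabla_r v(r, \theta, t) \big).
\]
Substituting both expressions into \eqref{Ane} yields
\[
\frac{1}{c^2} \partial_t^2 v(r, \theta, t) - \beta^{-1}(r) \nabla_r \cdot \big( M(r, \theta) \nabla_r v(r, \theta, t) \big) = 0.
\]
Finally, since $\zeta, \zeta' > 0$ on the interval of interest, $\beta(r) = \zeta(r)\zeta'(r)/r$ is strictly positive, so multiplying through by $\beta(r)$ preserves equivalence and produces the stated form.

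There is no substantive obstacle: the nontrivial content is already packaged in Lemmas \ref{A1}--\ref{A3}, where the orthogonality of $Q$ and the diagonal scaling matrices are used to recast the polar Laplacian in divergence form with respect to the new coordinates. The only point worth stating explicitly in the write-up is the commutation of $\partial_t^2$ with the spatial change of variables, which legitimizes multiplication by $\beta(r)$ to obtain a divergence-form equation suitable for the variational formulation used in Section 3.
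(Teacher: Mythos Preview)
Your proof is correct and follows essentially the same approach as the paper's own proof: substitute Lemma~\ref{A3} into \eqref{Ane}, use that the time derivative commutes with the spatial change of variables, and multiply through by $\beta(r)$. You are simply more explicit than the paper about the commutation of $\partial_t^2$ with the substitution and about the positivity of $\beta$, both of which are welcome clarifications.
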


\begin{proof}
 Using Lemma \ref{A2}--Lemma \ref{A3}, we from \eqref{Ane} that
\[
0=\Big(\frac{1}{c^2}\partial_t^2 u-\Delta u\Big)\Big|_{\rho=\zeta(r)}
=\frac{1}{c^2}\partial_t^2 v-\beta^{-1}\nabla_r\cdot
\left(M\nabla_r  v\right).
\]
The proof is completed by multiplying $\beta$ on the above equation.
\end{proof}

\section{change of variables in three dimensions}

Let $\boldsymbol x=(x, y, z)\in\mathbb R^3$ and $\rho=|\boldsymbol x|$. The
spherical coordinates $(\rho, \theta, \varphi)$ are related to the Cartesian
coordinates $(x, y, z)$ by $x=\rho\sin\theta\cos\varphi,
y=\rho\sin\theta\sin\varphi, z=\rho\cos\theta$. The local orthonormal basis is
\begin{align*}
\boldsymbol e_\rho&=(\sin\theta\cos\varphi, \sin\theta\sin\varphi,
\cos\theta)^\top,\\
\boldsymbol e_\theta&=(\cos\theta\cos\varphi,
\cos\theta\sin\varphi, -\sin\theta)^\top,\\
\boldsymbol e_\varphi&=(-\sin\varphi, \cos\varphi, 0)^\top.
\end{align*}
Again, denote by $\nabla_\rho$ and $\nabla_\rho\cdot$ the gradient operator and
the divergence operator in the old coordinates $(\rho, \theta, \varphi)$,
respectively. In this section, we present parallel results for the
three-dimensional acoustic wave equation:
\begin{equation}\label{Bne}
\frac{1}{c^2}\partial_t^2
u(\rho,\theta,\varphi,t)-\Delta_\rho u(\rho,\theta,\varphi,
t)=0\quad\text{in}~\mathbb R^3,~ t>0,
\end{equation}
where $\Delta_\rho$ is the Laplace operator and $c>0$ is the wave speed.

Consider the change of variables $\rho=\zeta(r)$, where $\zeta$ is a
smooth and invertible function. Denote by $\nabla_r$ and $\nabla_r\cdot$
the gradient operator and the divergence operator in the new coordinates $(r,
\theta,\varphi)$, respectively.

\begin{lemma}\label{B1}
 Let $v(r,\theta,\varphi,t)=u(\rho,\theta,\varphi,t)|_{\rho=\zeta(r)}$ be a
differentiable scalar function, then
\[
\nabla_{\rho}u(\rho,\theta,\varphi,t)|_{\rho=\zeta(r)}=Q
\begin{bmatrix}
\frac{1}{\zeta'(r)} & 0 & 0\\
0 & \frac{r}{\zeta(r)} & 0\\
0 & 0 & \frac{r}{\zeta(r)}
\end{bmatrix}
Q^{\top}\nabla_{r} v(r,\theta,\varphi,t),
\]
where $R$ is an orthonormal matrix given by
\[
 Q(\theta,\varphi)=\begin{bmatrix}
   \sin\theta\cos\varphi & \cos\theta\cos\varphi & -\sin\varphi\\
\sin\theta\sin\varphi & \cos\theta\sin\varphi & \cos\varphi\\
\cos\theta & -\sin\theta & 0
   \end{bmatrix}.
\]
\end{lemma}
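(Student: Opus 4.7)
The plan is to mirror the argument of Lemma \ref{A1}, replacing the planar polar gradient by the spherical one and tracking the extra $\varphi$ component. The proof is a direct computation in two stages: first express $\nabla_\rho u$ in the spherical local orthonormal basis $\{\boldsymbol e_\rho,\boldsymbol e_\theta,\boldsymbol e_\varphi\}$, then substitute $\rho=\zeta(r)$ using the chain rule to convert $\partial_\rho$ into $\partial_r$.

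First I would write
\[
\nabla_\rho u=\partial_\rho u\,\boldsymbol e_\rho+\frac{1}{\rho}\partial_\theta u\,\boldsymbol e_\theta+\frac{1}{\rho\sin\theta}\partial_\varphi u\,\boldsymbol e_\varphi.
\]
Since $v(r,\theta,\varphi,t)=u(\zeta(r),\theta,\varphi,t)$, the chain rule gives $\partial_r v=\zeta'(r)\,\partial_\rho u|_{\rho=\zeta(r)}$, while $\partial_\theta v=\partial_\theta u|_{\rho=\zeta(r)}$ and $\partial_\varphi v=\partial_\varphi u|_{\rho=\zeta(r)}$. Substituting yields
\[
\nabla_\rho u|_{\rho=\zeta(r)}=\frac{1}{\zeta'(r)}\partial_r v\,\boldsymbol e_\rho+\frac{1}{\zeta(r)}\partial_\theta v\,\boldsymbol e_\theta+\frac{1}{\zeta(r)\sin\theta}\partial_\varphi v\,\boldsymbol e_\varphi.
\]

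Second, I would observe that the columns of $Q(\theta,\varphi)$ are exactly the Cartesian representations of $\boldsymbol e_\rho,\boldsymbol e_\theta,\boldsymbol e_\varphi$, so $Q$ is orthonormal and for any triple $(a,b,c)^\top$ we have $Q(a,b,c)^\top=a\boldsymbol e_\rho+b\boldsymbol e_\theta+c\boldsymbol e_\varphi$. In particular, the spherical expansion of $\nabla_r v$ gives
\[
\nabla_r v=Q\begin{bmatrix}\partial_r v\\[2pt]\frac{1}{r}\partial_\theta v\\[2pt]\frac{1}{r\sin\theta}\partial_\varphi v\end{bmatrix},
\qquad
Q^\top\nabla_r v=\begin{bmatrix}\partial_r v\\[2pt]\frac{1}{r}\partial_\theta v\\[2pt]\frac{1}{r\sin\theta}\partial_\varphi v\end{bmatrix}.
\]
Multiplying by the diagonal matrix $\operatorname{diag}(1/\zeta',\,r/\zeta,\,r/\zeta)$ scales these components into exactly $(1/\zeta')\partial_r v$, $(1/\zeta)\partial_\theta v$, $(1/(\zeta\sin\theta))\partial_\varphi v$, and applying $Q$ on the left recombines them into the spherical expansion displayed above. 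This matches $\nabla_\rho u|_{\rho=\zeta(r)}$ and completes the proof.

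I do not expect any real obstacle: the only subtlety compared with the two-dimensional Lemma \ref{A1} is bookkeeping, namely that the $\boldsymbol e_\varphi$ component picks up a $1/\sin\theta$ factor in the gradient and that the two angular diagonal entries of the middle matrix are equal (both $r/\zeta$), so cancellation of the $\sin\theta$'s between $\nabla_r v$ and $\nabla_\rho u|_{\rho=\zeta(r)}$ happens automatically without needing a second distinct scaling. The orthonormality of $Q$ (so $QQ^\top=I$) is the only structural fact used, and it is immediate from the definition.
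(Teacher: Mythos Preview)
Your proposal is correct and follows essentially the same approach as the paper's proof: both write $\nabla_\rho u$ in the spherical orthonormal basis, apply the chain rule $\partial_\rho u|_{\rho=\zeta(r)}=(1/\zeta')\partial_r v$, and then recognize the result as $Q\,\mathrm{diag}(1/\zeta',r/\zeta,r/\zeta)\,Q^\top\nabla_r v$ using that the columns of $Q$ are $\boldsymbol e_\rho,\boldsymbol e_\theta,\boldsymbol e_\varphi$ and $Q^\top Q=I$. Your write-up is slightly more explanatory about the role of $Q$ as a change-of-basis matrix, but the computation is identical.
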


\begin{proof}
It follows from the straightforward calculations that
 \begin{align*}
\nabla_{\rho} u|_{\rho=\zeta(r)}&=
\partial_{\rho}u|_{\rho=\zeta(r)}\boldsymbol{e}_{\rho}
+\frac{1}{\rho}\partial_{\theta}u|_{\rho=\zeta(r)}\boldsymbol{e}_{
\theta}+\frac{1}{\rho\sin\theta}
\partial_\varphi u|_{\rho=\zeta(r)}\boldsymbol
e_\varphi\\
&=\frac{1}{\zeta'}\partial_{r}v\boldsymbol{e}_r+\frac{1}{\zeta}
\partial_{\theta}v\boldsymbol{e}_{\theta}+\frac{1}{\zeta\sin\theta}
\partial_\varphi v\boldsymbol e_\varphi\\	
&= \begin{bmatrix}
\sin\theta\cos\varphi & \cos\theta\cos\varphi & -\sin\varphi\\
\sin\theta\sin\varphi & \cos\theta\sin\varphi & \cos\varphi\\
\cos\theta & -\sin\theta & 0
\end{bmatrix}
\begin{bmatrix}
\frac{1}{\zeta'} & 0 & 0\\
0 & \frac{r}{\zeta} & 0\\
0 & 0 & \frac{r}{\zeta}
\end{bmatrix}
\begin{bmatrix}
\partial_r v\\
\frac{1}{r}\partial_{\theta}v\\
\frac{1}{r\sin\theta}\partial_\varphi v
\end{bmatrix}\\
&= Q\begin{bmatrix}
\frac{1}{\zeta'} & 0 & 0\\
0 & \frac{r}{\zeta} & 0\\
0 & 0 & \frac{r}{\zeta}
\end{bmatrix}
Q^\top Q
\begin{bmatrix}
\partial_r v\\
\frac{1}{r}\partial_{\theta}v\\
\frac{1}{r\sin\theta}\partial_\varphi v
\end{bmatrix}\\
&=Q\begin{bmatrix}
\frac{1}{\zeta'} & 0 & 0\\
0 & \frac{r}{\zeta} & 0\\
0 & 0 & \frac{r}{\zeta}
\end{bmatrix}
Q^{\top}\nabla_{r}v,
\end{align*}
which completes the proof.
\end{proof}

\begin{lemma}\label{B2}
Let $\boldsymbol v(r,\theta,\varphi,t)=\boldsymbol
u(\rho,\theta,\varphi,t)|_{\rho=\zeta(r)}$ be a differentiable vector function,
then
\[
\nabla_\rho\cdot\boldsymbol u(\rho,
\theta,\varphi,t)|_{\rho=\zeta(r)}=\beta^{-1}(r)\nabla_{r}\cdot\left(K(r,
\theta,\varphi)\boldsymbol{v}(r,\theta,\varphi,t) \right),
\]
where
\[
 \beta(r)=\frac{\zeta^2(r)\zeta'(r)}{r^2},\quad K(r,\theta,\varphi)=Q
\begin{bmatrix}
\frac{\zeta^2(r)}{r^2} & 0 & 0\\
0 & \frac{\zeta(r)\zeta'(r)}{r} & 0\\
0 & 0 & \frac{\zeta(r)\zeta'(r)}{r}
\end{bmatrix} Q^{\top}.
\]
\end{lemma}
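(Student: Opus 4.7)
The proof plan is to mirror the two-dimensional argument of Lemma \ref{A2}, but using the spherical-coordinate divergence formula. I would begin by decomposing the vector field in the local orthonormal spherical basis as $\boldsymbol u = u_\rho \boldsymbol e_\rho + u_\theta \boldsymbol e_\theta + u_\varphi \boldsymbol e_\varphi$ and noting that under the change of variables $\rho = \zeta(r)$ the components satisfy $u_\rho|_{\rho=\zeta(r)} = v_r$, $u_\theta|_{\rho=\zeta(r)} = v_\theta$, $u_\varphi|_{\rho=\zeta(r)} = v_\varphi$, since only the radial coordinate is being reparameterized and the angular basis vectors are unchanged.

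Next I would apply the standard spherical divergence formula
\[
\nabla_\rho \cdot \boldsymbol u = \frac{1}{\rho^2}\partial_\rho(\rho^2 u_\rho) + \frac{1}{\rho \sin\theta}\partial_\theta(\sin\theta\, u_\theta) + \frac{1}{\rho \sin\theta}\partial_\varphi u_\varphi,
\]
restrict to $\rho = \zeta(r)$, and use $\partial_\rho = \zeta'(r)^{-1}\partial_r$ in the radial term. This converts the three summands to expressions involving $\zeta, \zeta', r$ and the new coordinates. The key algebraic step is to factor out $\beta(r)^{-1} = r^2/(\zeta^2\zeta')$ uniformly from all three terms. In the radial term this requires rewriting $\frac{1}{\zeta^2\zeta'}\partial_r(\zeta^2 v_r) = \beta^{-1}\cdot\frac{1}{r^2}\partial_r\!\left(r^2\cdot\tfrac{\zeta^2}{r^2}v_r\right)$; in the two angular terms, $\frac{1}{\zeta\sin\theta}$ is rewritten as $\beta^{-1}\cdot\frac{1}{r\sin\theta}$ after multiplying the integrand by $\zeta\zeta'/r$.

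After this bookkeeping the bracketed expression becomes exactly the spherical-coordinate divergence, in the $r$-variables, of the vector field
\[
\tfrac{\zeta^2}{r^2} v_r \boldsymbol e_r + \tfrac{\zeta\zeta'}{r} v_\theta \boldsymbol e_\theta + \tfrac{\zeta\zeta'}{r} v_\varphi \boldsymbol e_\varphi.
\]
Finally I would identify this vector as $K\boldsymbol v$: since the orthonormal matrix $Q(\theta,\varphi)$ of Lemma \ref{B1} converts the spherical-component column vector into Cartesian components, one has $K\boldsymbol v = Q\,\mathrm{diag}(\zeta^2/r^2,\zeta\zeta'/r,\zeta\zeta'/r)\,Q^\top \boldsymbol v$, and $Q^\top \boldsymbol v$ simply reads off the spherical components $(v_r,v_\theta,v_\varphi)^\top$; applying the diagonal scaling and then $Q$ reproduces exactly the scaled vector above. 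This yields the claimed identity.

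The only real obstacle is the algebraic juggling required to make $\beta^{-1}$ appear as a common prefactor outside all three summands, especially inserting the correct compensating factor of $r$ (resp.\ $r^2$) inside $\partial_r$ so that the remainder assembles into the spherical divergence in the new coordinates; everything else is bookkeeping of the basis change via $Q$.
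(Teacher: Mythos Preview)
Your proposal is correct and follows essentially the same route as the paper's own proof: write $\boldsymbol u$ in the spherical basis, apply the spherical divergence formula, substitute $\rho=\zeta(r)$ with $\partial_\rho=\zeta'^{-1}\partial_r$, factor out $\beta^{-1}=r^2/(\zeta^2\zeta')$ from all three terms, and recognize the bracket as $\nabla_r\cdot(K\boldsymbol v)$ via the $Q$-conjugation. The algebraic bookkeeping you describe matches the paper's calculation line by line.
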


\begin{proof}
Let $\boldsymbol u=u_\rho\boldsymbol e_\rho+u_\theta\boldsymbol
e_\theta+u_\varphi\boldsymbol e_\varphi$ and
$\boldsymbol v=v_r\boldsymbol e_r+v_\theta\boldsymbol
e_\theta+v_\varphi\boldsymbol e_\varphi$. A simple
calculation yields that
\begin{align*}
 \nabla_\rho\cdot\boldsymbol u(\rho, \theta,\varphi,
t)|_{\rho=\zeta(r)}&=\left(\frac{1}{\rho^2}\partial_\rho\left(\rho^2
u_{\rho}\right)+\frac{1}{\rho\sin\theta}\partial_{\theta}(\sin\theta u_{\theta}
)+\frac{1}{\rho\sin\theta}\partial_\varphi(u_\varphi)\right)\bigg|_{
\rho=\zeta(r)}\\
&=\frac{1}{\zeta^2 \zeta'}\partial_{r} \left(\zeta^2
v_r\right)+\frac{1}{\zeta\sin\theta}\partial_{\theta}(\sin\theta
v_{\theta})+\frac{1}{\zeta\sin\theta}\partial_\varphi(v_\varphi)\\
&=\frac{r^2}{\zeta^2
\zeta'}\left[\frac{1}{r^2}\partial_{r}\left(r^2\frac{\zeta^2 }{r^2}
v_{r}\right)+\frac{1}{r\sin\theta} \partial_{\theta}\left(\frac{\zeta\zeta'}{r}
\sin\theta v_{\theta}\right)+\frac{1}{r\sin\theta}\partial_\varphi\left(\frac{
\zeta\zeta'}{r} v_\varphi\right)\right]\\
&=\beta^{-1}\nabla_{r}\cdot \left(\frac{\zeta^2}{r^2}v_{r}\boldsymbol{e}_r 	
+\frac{\zeta\zeta'}{r} v_{\theta}\boldsymbol{e}_{\theta}+\frac{\zeta\zeta'}{r}
v_\varphi\boldsymbol e_\varphi\right)\\
&=\beta^{-1}\nabla_{r}\cdot\left(Q
\begin{bmatrix}
\frac{\zeta^2}{r^2} & 0 & 0\\
0 & \frac{\zeta\zeta'}{r} & 0\\
0 & 0 & \frac{\zeta\zeta'}{r}
\end{bmatrix}
Q^\top Q
\begin{bmatrix}
v_{r}\\
v_{\theta}\\
v_\varphi
\end{bmatrix}
\right)\\
&=\beta^{-1}\nabla_{r}\cdot\left(K\boldsymbol{v}
\right),
\end{align*}
which completes the proof.
\end{proof}

\begin{lemma}\label{B3}
Let $ v(r,\theta,\varphi,t)= u(\rho,\theta,\varphi,t)|_{\rho=\zeta(r)}$ be a
differentiable function, then
\[
\Delta_\rho u(\rho,\theta,\varphi,
t)|_{\rho=\zeta(r)}=\beta^{-1}(r)\nabla_r\cdot
\left(M(r,\theta,\varphi)\nabla_r v(r,\theta,\varphi,t)\right),
\]
where
\[
 M(r,\theta,\varphi)=Q\begin{bmatrix}
     \frac{\zeta^2(r)}{r^2\zeta'(r)} & 0 & 0\\
     0 & \zeta'(r) & 0)\\
     0 & 0 & \zeta'(r)
    \end{bmatrix}Q^\top.
\]
\end{lemma}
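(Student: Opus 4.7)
The proof will mirror exactly the argument used in Lemma \ref{A3} for the two-dimensional case, now carried out in three dimensions with the spherical-coordinate orthonormal matrix $Q(\theta,\varphi)$. The plan is to write the Laplacian as a composition, namely
\[
\Delta_\rho u\bigl|_{\rho=\zeta(r)}=\nabla_\rho\cdot(\nabla_\rho u)\bigl|_{\rho=\zeta(r)},
\]
and then apply Lemma \ref{B1} to handle the inner gradient and Lemma \ref{B2} to handle the outer divergence.

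First I would use Lemma \ref{B1} to identify the vector field $\boldsymbol w:=\nabla_\rho u\bigl|_{\rho=\zeta(r)}$ as
\[
\boldsymbol w = Q\begin{bmatrix}\frac{1}{\zeta'} & 0 & 0\\ 0 & \frac{r}{\zeta} & 0\\ 0 & 0 & \frac{r}{\zeta}\end{bmatrix}Q^{\top}\nabla_r v.
\]
Then I would feed $\boldsymbol w$ into Lemma \ref{B2}. This yields
\[
\Delta_\rho u\bigl|_{\rho=\zeta(r)}=\beta^{-1}\nabla_r\cdot\bigl(K\,\boldsymbol w\bigr)
=\beta^{-1}\nabla_r\cdot\!\left(K\,Q\begin{bmatrix}\frac{1}{\zeta'} & 0 & 0\\ 0 & \frac{r}{\zeta} & 0\\ 0 & 0 & \frac{r}{\zeta}\end{bmatrix}Q^{\top}\nabla_r v\right).
\]

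The remaining step is purely algebraic: since $Q$ is orthonormal ($Q^\top Q = I$), the product $K\cdot Q\,\mathrm{diag}(1/\zeta',\,r/\zeta,\,r/\zeta)\,Q^\top$ simplifies to
\[
Q\begin{bmatrix}\frac{\zeta^2}{r^2} & 0 & 0\\ 0 & \frac{\zeta\zeta'}{r} & 0\\ 0 & 0 & \frac{\zeta\zeta'}{r}\end{bmatrix}\begin{bmatrix}\frac{1}{\zeta'} & 0 & 0\\ 0 & \frac{r}{\zeta} & 0\\ 0 & 0 & \frac{r}{\zeta}\end{bmatrix}Q^\top = Q\begin{bmatrix}\frac{\zeta^2}{r^2\zeta'} & 0 & 0\\ 0 & \zeta' & 0\\ 0 & 0 & \zeta'\end{bmatrix}Q^\top = M,
\]
giving the desired identity. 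There is no genuine obstacle here: the orthonormality of $Q$ makes the two diagonal matrices combine by multiplication of their entries, and the result is precisely the matrix $M$ stated in the lemma. The only care required is bookkeeping of the entries $\zeta^2/r^2$ versus $\zeta\zeta'/r$ in $K$ (from Lemma \ref{B2}) against $1/\zeta'$ versus $r/\zeta$ in the gradient matrix (from Lemma \ref{B1}), to confirm the angular entries reduce to $\zeta'$ and the radial entry to $\zeta^2/(r^2\zeta')$.
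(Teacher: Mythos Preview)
Your proposal is correct and follows essentially the same approach as the paper: write $\Delta_\rho u|_{\rho=\zeta(r)}=\nabla_\rho\cdot(\nabla_\rho u)|_{\rho=\zeta(r)}$, apply Lemma~\ref{B1} to the inner gradient and Lemma~\ref{B2} to the outer divergence, then use $Q^\top Q=I$ to multiply the two diagonal matrices and obtain $M$. The paper's proof is slightly terser (it phrases the step as ``using similar steps of the change of variables for the proofs of Lemmas~\ref{B1}--\ref{B2}'' rather than invoking the lemmas explicitly), but the computation is identical.
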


\begin{proof}
It is easy to note that
\[
 \Delta_\rho u|_{\rho=\zeta(r)}=
\nabla_{\rho}\cdot(\nabla_\rho u) |_{\rho=\zeta(r)}.
\]
Using similar steps of the change of variables for the proofs of Lemmas
\ref{B1}--\ref{B2}, we have
\begin{align*}
 \nabla_{\rho}\cdot(\nabla_\rho u)
|_{\rho=\zeta(r)}&=\beta^{-1}\nabla_r\cdot\left(KQ\begin{bmatrix}
                                  \frac{1}{\zeta'} & 0 & 0\\
                                 0 & \frac{r}{\zeta} & 0\\
                                 0 & 0 & \frac{r}{\zeta}
                             \end{bmatrix}Q^\top\nabla_r v\right)\\
&=\beta^{-1}\nabla_r\cdot\left(Q\begin{bmatrix}
                                  \frac{\zeta^2}{r^2 \zeta'} & 0 & 0\\
                                 0 & \zeta' & 0\\
                                 0 & 0 & \zeta'
                             \end{bmatrix}Q^\top\nabla_r v\right)\\
                &=\beta^{-1}\nabla_r\cdot\left(M\nabla_r v
\right),
\end{align*}
which completes the proof.
\end{proof}

\begin{theorem}\label{B4}
 In the new coordinates $(r,\theta,\varphi)$, the acoustic wave equation
\eqref{Bne} becomes
\begin{align*}
\frac{\beta(r)}{c^2}\partial_t^2
v(r,\theta,\varphi,t)-\nabla_r\cdot(M(r,\theta,\varphi)\nabla_r
v(r,\theta,\varphi,t))=0.
 \end{align*}
\end{theorem}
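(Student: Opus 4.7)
The plan is to mimic the two-dimensional argument in Theorem \ref{A4} almost verbatim, since all the heavy lifting has already been done in the three-dimensional Lemmas \ref{B1}--\ref{B3}. Concretely, I would start from the three-dimensional wave equation \eqref{Bne} and simply evaluate both terms at $\rho=\zeta(r)$, using that the time derivative commutes with the purely spatial change of variables so that $\partial_t^2 u|_{\rho=\zeta(r)}=\partial_t^2 v$.

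Next I would invoke Lemma \ref{B3}, which identifies $\Delta_\rho u(\rho,\theta,\varphi,t)|_{\rho=\zeta(r)}$ with $\beta^{-1}(r)\nabla_r\cdot(M(r,\theta,\varphi)\nabla_r v(r,\theta,\varphi,t))$. Substituting this into \eqref{Bne} yields
\[
\frac{1}{c^2}\partial_t^2 v-\beta^{-1}\nabla_r\cdot(M\nabla_r v)=0.
\]
Multiplying through by the strictly positive continuous factor $\beta(r)=\zeta^2(r)\zeta'(r)/r^2$ (positivity is ensured by $\zeta>0$ and $\zeta'>0$, as noted in Section 2) produces the claimed identity. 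No Jacobian computations or integration by parts are needed at this stage, because the divergence-form rewriting has been absorbed into the statement of Lemma \ref{B3}.

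There is no real obstacle here; the theorem is essentially a bookkeeping corollary of Lemma \ref{B3}. The only thing one should mention explicitly is that $\beta$ is bounded away from zero on $[0,b]$ so that the multiplication by $\beta$ is a genuine equivalence of equations, and that the continuity properties of $\beta$ and $M$ (symmetric and positive definite, as observed after the definition of $M$ in Section 2.3) are what make the resulting equation the well-defined PDE used throughout Section 3. Hence the proof reduces to two lines: apply Lemma \ref{B3}, then multiply by $\beta$.
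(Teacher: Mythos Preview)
Your proposal is correct and matches the paper's own proof essentially line for line: the paper also evaluates \eqref{Bne} at $\rho=\zeta(r)$, invokes Lemma~\ref{B3} to rewrite the Laplacian term as $\beta^{-1}\nabla_r\cdot(M\nabla_r v)$, and then multiplies through by $\beta$. Your additional remarks on the positivity and continuity of $\beta$ are not needed for the bare identity but are a sensible gloss.
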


\begin{proof}
 Using Lemma \ref{B2}--Lemma \ref{B3}, we from \eqref{Bne} that
 \begin{align*}
  0=\Big(\frac{1}{c^2}\partial_t^2
u-\Delta_\rho u\Big)\Big|_{\rho=\zeta(r)}
=\frac{1}{c^2}\partial_t^2 v-\beta^{-1}\nabla_r\cdot
\left(M\nabla_r v\right).
 \end{align*}
 The proof is completed by multiplying $\beta$ on the above equation.
\end{proof}

\end{document}